\newcommand{\ra}{\rightarrow}
\newcommand{\rla}{\leftrightarrow}
\newcommand{\we}{\wedge}
\newcommand{\cd} {\mathrm{\cdot}}
\newcommand{\edge}[1]{\ar@{-}[#1]}
\newcommand{\V}{\mathsf{DLCMI}}
\newcommand{\Var}{\mathsf{V}}
\newcommand{\GCRL}{\mathsf{GCRL}}
\newcommand{\CWRL}{\mathsf{CWRL}}
\newcommand{\WH}{\mathsf{WH}}
\newcommand{\DL}{\mathsf{DL}}
\newcommand{\ConA}{\mathrm{Con}(A)}
\newcommand{\IDCRL} {\mathsf{IDCRL}}
\newcommand{\MV}{\mathsf{MV}}
\theoremstyle{plain}
\newtheorem{thm}{Theorem}
\newtheorem{cor}[thm]{Corollary}
\newtheorem{lem}[thm]{Lemma}
\newtheorem{prop}{Proposition}
\theoremstyle{definition}
\newtheorem{defn}{Definition}
\newtheorem{rem}{Remark}
\newtheorem{ex}{Example}
\begin{document}

\title{On principal congruences in distributive lattices
with a commutative monoidal operation and an implication
\\
}

\author{Ramon Jansana and Hern\'an Javier San Mart\'in}

\maketitle

\begin{abstract}
In this paper we introduce and study a variety of algebras that
properly includes integral distributive commutative residuated
lattices and weak Heyting algebras. Our main goal is to give a
characterization of the principal congruences in this variety. We
apply this description in order to study compatible functions.
\end{abstract}

\section{Introduction}

It is very convenient to have good descriptions of the principal
congruences of the algebras of a variety. One type of description
is having first-order definable principal congruences. A much
simpler and useful type of description is having equationally
definable principal congruences. This concept was introduced in
\cite{FrGrQu79,FrGrQu80}. Recall that a variety $\Var$ has
\textit{equationally definable principal congruences} (EDPC) if
there exists a finite family of quaternary terms
$\{u_i,v_i\}_{i=1}^{r}$ such that for every algebra $A$ in $\Var$
and every principal congruence $\theta(a,b)$ of $A$\footnote{If
$A$ is an algebra and $a,b\in A$, then $\theta(a,b)$ denotes the
principal congruence of $A$ generated by $(a, b)$, i.e., the
smallest congruence of $A$ that contains $(a,b)$.}, it holds that
$(c,d) \in \theta(a,b)$ if and only if $u_i(a, b, c, d) = v_i(a,
b, c, d)$ for each $i = 1,\dots, r$. The property EDPC is also of
logical interest because an algebraizable logic whose equivalent
algebraic semantics is a variety $\Var$ has some form of
deduction-detachment theorem if and only if the variety $\Var$ has
EDPC, a consequence of a more general result proved by Blok and
Pigozzi in \cite[Thm.\ 5.5]{BP}(see also \cite[Thm.\ 3.85]{Fo16}).

There are varieties that do not have EDPC but where it is still
possible to have a good characterization of the principal
congruences with the following local version of EDPC: there exists
a finite family of quaternary terms
$\{u_{(i,n,k)},v_{(i,n,k)}\}_{i=1}^{r}$ (with $n, k\geq 0$) such
that for every principal congruence $\theta(a,b)$ of any algebra
$A$ in the variety it holds that $(c,d) \in \theta(a,b)$ if and
only if there exist $n,k\geq 0$ such that $u_{(i,n,k)}(a, b, c, d)
= v_{(i,n,k)}(a, b, c, d)$ for every $i = 1, \dots r$. We say that
a variety of algebras has \emph{locally equationally definable
principal congruences} if there exists a finite family of
quaternary terms such that in every algebra of the variety they
define the principal congruences in the way just described. In
particular, the variety of commutative residuated lattices has
locally equationally definable principal congruences (it can be
deduced from results of \cite{Ag}) and the variety of weak Heyting
algebras has this property too \cite[Theorem 2.2]{SMc}. The concept of 
locally equationally definable principal congruences for quasivarieties is introduced in 
\cite{BP91}, in an equivalent form. From the logical point of view it is related to the notion of having a local deduction theorem. 

The main goal of the paper is to prove that a variety of algebras
that properly includes the integral distributive commutative
residuated lattices \cite{Ts} and the weak Heyting algebras
\cite{CJ, Ben1} has locally equationally definable principal
congruences. We call the members of this variety
\emph{distributive lattices with a commutative monoidal operation
and an implication}  and we denote the variety by $\V$.

The second goal of the paper is the study of compatible functions
of algebras in $\V$, by applying the characterization  of the
principal congruences. Given an algebra $A$ and a function
$f:A^{n}\ra A$, we say that $f$ is \emph{compatible} if every
congruence of $A$ is a congruence of $A$ enriched with $f$ as a
new operation. The principal congruences are closely related to
compatible functions. For instance, if $f:A\ra A$ is a function,
then $f$ is compatible if and only if $(f(a),f(b))\in \theta(a,b)$
for every $a,b \in A$. Furthermore, certain algebraizable logics
whose equivalent algebraic semantics are varieties are also
connected with compatible functions. Caicedo showed in \cite{C1}
that in any axiomatic expansion $\mathrm{L}'$ of an algebraizable
logic $\mathrm{L}$ by adding only new connectives that are
implicitly definable, the new connectives can be translated to
compatible functions (whenever they exist) in the algebras of the
equivalent algebraic semantics of the initial logic $\mathrm{L}$.
Therefore, the principal congruences in a variety that is the
equivalent algebraic semantics of an algebraizable logic
$\mathrm{L}$ (which has an axiomatic expansion $\mathrm{L}'$ with
the above mentioned property) is also strongly linked with
properties of $\mathrm{L}'$.

The paper is organized as follows. In Section 2 we provide the
basic definitions and results. In Section 3 we show that $\V$ has
locally equationally definable principal congruences. In
particular, we obtain known characterizations of the principal
congruences of weak Heyting algebras and integral distributive
commutative residuated lattices. The first one was obtained in
\cite{SMc}, and the second is part of the folklore and it follows
easily from \cite{Ag}. In Section 4 we apply the results of the
previous section to study compatible functions in $\V$. Finally,
in Section 5 we establish other connections with existing
literature.

\section{Basic definitions and results}

In what follows we start by recalling the definitions of
commutative residuated lattice and weak Heyting algebra
respectively.

\begin{defn}
An algebra $(A,\we,\vee,\cd,\ra,e)$ of type $(2,2,2,2,0)$ is said
to be a \emph{commutative residuated lattice} if the following
conditions are satisfied:
\begin{enumerate}[\normalfont \hspace{4mm} (1)]
\item $(A,\cd,e)$ is a commutative monoid,
\item $(A,\we,\vee)$ is a lattice,
\item   for every $a,b,c \in A$, $a\cd b
\leq c$ if and only if $a\leq b\ra c$.
\end{enumerate}
\end{defn}

A commutative residuated lattice $(A,\we,\vee,\cd,\ra,e)$ is
\textit{distributive} if its lattice reduct is distributive and it
is \textit{integral} if the unit of the monoid is the largest element
of the lattice reduct. We write $\IDCRL$ for the variety of
integral distributive commutative residuated lattices. Since the
class of commutative residuated lattices is a variety, then
$\IDCRL$ is a variety.

\begin{defn}
An algebra $(A,\we,\vee,\ra,0,1)$ of type $(2,2,2,0,0)$ is a
\textit{weak Heyting algebra} if the reduct algebra $(A,\wedge,
\vee, 0,1)$ is a bounded distributive lattice and $\ra$ is a
binary operation such that satisfies the following conditions for
all $a,b,c\in A$:
\begin{enumerate}[\normalfont \hspace{4mm} (1)]

\item $(a\ra
b)\we(a\ra c)=a\ra(b\we c)$,
\item $(a\ra c)\we(b\ra c)=(a\vee b)\ra c$,
\item $(a\ra b)\we(b\ra c)\leq a\ra c$,
\item $a\ra a = 1$.
\end{enumerate}
\end{defn}

We denote by $\WH$ the variety of weak Heyting algebras.
\vspace{1pt}

In what follows we introduce a variety that properly contains the
variety of integral distributive commutative residuated lattices
and the variety of weak Heyting algebras. This is the variety that
we study in this paper.

\begin{defn} \label{d1}
An algebra $(A,\we,\vee,\cd,\ra,1)$ of type $(2,2,2,2,0)$ is a
\textit{distributive lattice with a commutative monoidal operation
and an implication} if for every $a,b,c\in A$ the following
conditions are satisfied:
\begin{enumerate}[\normalfont \hspace{4mm} (1)]
\item $(A,\we,\vee)$ is a distributive lattice, \item $1$ is the
largest element of $(A,\we,\vee)$, \item $(A,\cd,1)$ is a commutative
monoid, \item $(a\ra b)\we(a\ra c)=a\ra(b\we c)$, \item $(a\ra
c)\we(b\ra c)=(a\vee b)\ra c$, \item $a\ra a = 1$, \item $(a\vee
b)\cd c = (a\cd c) \vee (b \cd c)$, \item $(a\ra b)\cd(b\ra c)\leq
a\ra c$, \item $a\ra b \leq (a\cd c) \ra (b\cd c)$.
\end{enumerate}
\end{defn}

We denote by $\V$ the variety of algebras given in Definition
\ref{d1} and  we refer to its members as $\V s$.

Let $A\in \V$. The following conditions are satisfied for all
$a,b,c\in A$:

\begin{enumerate}[\normalfont \hspace{4mm} 1)]
\item If $a\leq b$, then $a\cd c\leq b\cd c$, $b\ra c \leq a\ra c$,
and $c\ra a \leq c\ra b$. \item $a\cd b \leq a\we b$. \item $a\cd
b \leq a$. \item $1\ra a \leq b \ra (a\cd b)$.
\end{enumerate}

\begin{lem}
$\IDCRL$ is a subvariety of $\V$.
\end{lem}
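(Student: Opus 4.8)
The plan is to take an arbitrary $A=(A,\we,\vee,\cd,\ra,e)\in\IDCRL$ and verify, one by one, the nine conditions of Definition \ref{d1} for $A$ regarded as an algebra of type $(2,2,2,2,0)$ with the distinguished constant $e$ playing the role of $1$. The only point to settle before starting is that this constant really matches: by integrality the monoid unit $e$ is the largest element of the lattice, so it behaves exactly as the symbol $1$ does in Definition \ref{d1}. Conditions (1), (2) and (3) are then immediate, being literally the distributivity of the lattice reduct, the integrality, and the hypothesis that $(A,\cd,e)$ is a commutative monoid.

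For the remaining conditions I would first record the standard consequences of the residuation law, namely that $\cd$ is monotone and that $\ra$ is monotone in its second argument and antitone in its first, together with the evaluation inequality $a\cd(a\ra b)\leq b$ (which is residuation applied to $a\ra b\leq a\ra b$). With this toolkit, condition (6) follows because $a\cd e=a\leq a$ gives $e\leq a\ra a$, and $e$ being the top forces $a\ra a=e$. Condition (8) reduces, by residuation, to $(a\ra b)\cd(b\ra c)\cd a\leq c$, and two applications of the evaluation inequality give $(a\ra b)\cd a\leq b$ and then $b\cd(b\ra c)\leq c$. Condition (9) is analogous: by residuation it reduces to $(a\ra b)\cd a\cd c\leq b\cd c$, which follows from evaluation and monotonicity of $\cd$.

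The genuinely structural step is the interaction of $\cd$ with $\vee$, and I expect conditions (7), (5) and (4) to carry whatever little difficulty the lemma has. For (7) I would prove $(a\vee b)\cd c\leq (a\cd c)\vee(b\cd c)$ by residuating: setting $u=(a\cd c)\vee(b\cd c)$, both $a\cd c\leq u$ and $b\cd c\leq u$ give $a,b\leq c\ra u$, hence $a\vee b\leq c\ra u$, i.e.\ $(a\vee b)\cd c\leq u$; the reverse inequality is monotonicity. The same residuation argument, now using (7) to distribute $d\cd(a\vee b)=(d\cd a)\vee(d\cd b)$ with $d=(a\ra c)\we(b\ra c)$, yields (5); and (4) is proved symmetrically, its nontrivial inclusion $d\cd a\leq b\we c$ (for $d=(a\ra b)\we(a\ra c)$) coming again from two applications of evaluation. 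Since all of these are classical facts about residuated lattices, I do not anticipate a real obstacle: the only care needed is to establish (7) before invoking it in the proof of (5), and to keep track of which inclusion of each equality uses residuation and which uses mere monotonicity or antitonicity.
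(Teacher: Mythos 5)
Your proposal is correct, and it takes the same route as the paper: the paper's proof consists of the single remark that the lemma ``follows from straightforward computations based on properties of integral commutative residuated lattices,'' and your verification of conditions (1)--(9) of Definition~\ref{d1} via residuation (evaluation $a\cd(a\ra b)\leq b$, monotonicity, and join-preservation of $\cd$) is precisely the set of standard computations being alluded to. The one point the paper leaves implicit that you rightly make explicit is that integrality identifies the monoid unit $e$ with the top element $1$, so no gap remains.
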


\begin{proof}
It follows from straightforward computations based on properties
of integral commutative residuated lattices \cite{Ts}.
\end{proof}

\begin{lem} \label{WHsV}
The variety $\WH$ can be seen as a subvariety of $\V$, namely the
subvariety that is defined by the equation $x \wedge y \approx x
\cd y$. More precisely, we have that $(A,\we,\vee,\ra,0,1) \in
\WH$ if and only if $(A,\we,\vee,\we,\ra,1) \in \V$.
\end{lem}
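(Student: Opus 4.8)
The plan is to prove the biconditional in Lemma \ref{WHsV} by translating the defining conditions of each class into those of the other, using the identification $\cd = \we$. The claim is that under this identification the two axiom systems coincide, so I would verify each direction by a checklist of axioms. Since the monoidal operation in the $\V$-structure is literally the meet $\we$, the whole argument reduces to routine substitution rather than any deep computation.

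First I would establish the forward direction. Assume $(A,\we,\vee,\ra,0,1)\in\WH$. The lattice reduct $(A,\we,\vee)$ is a bounded distributive lattice, so conditions (1) and (2) of Definition \ref{d1} hold (with $1$ the top). For condition (3) I must check that $(A,\we,1)$ is a commutative monoid: this is immediate since meet is commutative, associative, and idempotent with $1$ as identity (because $1$ is the top element). Conditions (4), (5), and (6) of Definition \ref{d1} are verbatim the axioms (1), (2), and (4) of the weak Heyting definition, so they transfer directly. Condition (7), $(a\vee b)\cd c=(a\cd c)\vee(b\cd c)$, becomes $(a\vee b)\we c=(a\we c)\vee(b\we c)$, which is exactly the distributive law. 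It remains to verify the two implication-interaction axioms (8) and (9). Condition (8), $(a\ra b)\cd(b\ra c)\le a\ra c$, becomes $(a\ra b)\we(b\ra c)\le a\ra c$, which is precisely axiom (3) of the weak Heyting definition. Condition (9), $a\ra b\le(a\cd c)\ra(b\cd c)$, becomes $a\ra b\le(a\we c)\ra(b\we c)$; this is the one step that requires a short derivation rather than direct identification, and I expect it to be the main (modest) obstacle. I would derive it from the weak Heyting axioms, using that $\ra$ is order-reversing in its first argument and order-preserving in its second together with the meet-distribution of $\ra$ over its arguments (axioms (1) and (2)).

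For the converse direction, assume $(A,\we,\vee,\we,\ra,1)\in\V$, i.e.\ the monoidal operation is the meet. I would first note that this hypothesis is only consistent when the $\cd$ of the $\V$-structure equals $\we$, which is exactly how the algebra is presented. The lattice reduct is a bounded distributive lattice by conditions (1) and (2), and it has a least element $0$ since the signature of $\WH$ carries a bottom constant; here I would point out that the statement is about reducts and that $0$ is part of the intended bounded-lattice structure, so I should take $(A,\we,\vee)$ to be bounded. Then axioms (1), (2), and (4) of the weak Heyting definition are literally conditions (4), (5), and (6) of Definition \ref{d1}. Finally axiom (3), $(a\ra b)\we(b\ra c)\le a\ra c$, is condition (8) with $\cd$ replaced by $\we$. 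Thus every weak Heyting axiom is recovered.

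I expect the only genuine content to lie in two places. The first is verifying $\V$-condition (9) from the weak Heyting axioms in the forward direction, namely $a\ra b\le(a\we c)\ra(b\we c)$: I anticipate proving this by showing $a\ra b\le(a\we c)\ra b$ (first argument decreases, using that $a\we c\le a$ and the order-reversal of $\ra$ in its first coordinate, which itself follows from axiom (2)) and then $(a\we c)\ra b\le(a\we c)\ra(b\we c)$ via $(a\we c)\ra(b\we c)=((a\we c)\ra b)\we((a\we c)\ra c)$ from axiom (1), combined with $(a\we c)\ra c=1$ from axiom (4). The second delicate point is the careful bookkeeping of the constant $0$ and the fact that the two signatures differ (the $\WH$ signature has $0$ whereas the $\V$ signature does not); I would address this by reading the equivalence at the level of the common reducts and noting that $0$ is determined as the least element of the bounded lattice. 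With these observations, the equivalence follows, and I would close by remarking that since $\WH$ is thereby defined within $\V$ by the single equation $x\we y\approx x\cd y$, it is indeed a subvariety.
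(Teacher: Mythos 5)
Your proposal is correct and follows essentially the same route as the paper: the paper also treats all axioms except condition (9) as routine identifications under $\cd=\we$, and proves the key inequality $a\ra b\leq(a\we c)\ra(b\we c)$ exactly as you do, via $a\we c\leq a$ giving $a\ra b\leq(a\we c)\ra b$ and then the identity $(a\we c)\ra(b\we c)=(a\we c)\ra b$ (from axiom (1) together with $(a\we c)\ra c=1$). Your extra care with the constant $0$ and the explicit axiom checklist only fill in details the paper dismisses as easy.
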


\begin{proof}
In order to show that $\WH$ can be seen as a subvariety of $\V$,
we will prove that the inequality $a\ra b \leq (a\we c) \ra (b\we
c)$ holds in weak Heyting algebras, since the rest of the items
can be proved easily. Since $a\we c\leq a$, then $a\ra b \leq (a\we
c)\ra b$. But $(a\we c) \ra (b\we c) = (a\we c) \ra b$, so we
conclude that $a\ra b \leq (a\we c)\ra (b\we c)$.
\end{proof}

Now we will give an example of an algebra in $\V$ that is neither
a weak Heyting algebra nor an  integral distributive commutative
residuated lattice.

\begin{ex} \label{ex1}
Let $H_3 = \{0,a,1\}$ be the chain of three elements with $0<a<1$.
The algebra $(H_3,\we,\vee,\ra,1)$ belongs to $\WH$, where $\ra$
is given by $x\ra y = 1$ for every $x,y\in H_3$. Following the
convention given in Lemma \ref{WHsV} we also can say that
$H_{3}^{\WH} = (H_3, \we,\vee,\we,\to, 1) \in \WH$. Let
$H_{3}^{\MV} = (H_3,\we,\vee,\odot,\ra,1)$ be the MV-chain of
three elements presented as a residuated lattice \cite{CMO}. The
product and implication $\ra$ in $H_{3}^{\MV}$ are given in the
following tables:

\vspace{10pt}
\begin{center}
\begin{tabular}{c|c c c }
  $\odot$           &$0$    & $a$    &    $1$          \\
  \hline
  $0$           & $0$   & $0$    &    $0$         \\
  $a$           & $0$   & $0$    &    $a$           \\
  $1$           & $0$   & $a$    &    $1$
\end{tabular}
\hskip15pt
\begin{tabular}{c|c c c }
  $\ra$           &$0$    & $a$    &    $1$         \\
  \hline
  $0$           & $1$   & $1$    &    $1$          \\
  $a$           & $a$   & $1$    &    $1$           \\
  $1$           & $0$   & $a$    &    $1$
\end{tabular}
\end{center}
\vspace{3pt}

We define $A$ as the product algebra $H_{3}^{\MV} \times
H_{3}^{\WH}$. Since $H_{3}^{\MV}, H_{3}^{\WH} \in \V$ and $\V$ is
a variety,   then $A\in \V$. Since $(a,a)\cd (a,a) = (0,a)$,
$(a,a)\we (a,a) = (a,a)$ and $0\neq a$, then $(a,a)\cd (a,a) \neq
(a,a)\we (a,a)$, so $A\notin \WH$. In order to show that $A\notin
\IDCRL$ notice that $(x,y)\cd (z,w) = (x\odot z,y\we w)$ for every
$x,y,z,w \in H_3$. Since $(a,a) \ra (0,0) = (a,1)$ then $(a,a)
\leq (a,a)\ra (0,0)$. However $(a,a) \cd (a,a) = (0,a)\nleq
(0,0)$. Therefore, $A\notin \IDCRL$.
\end{ex}

It follows from Example \ref{ex1} that $\WH$ and $\IDCRL$ are
proper subvarieties of $\V$. Besides, $\WH$ and $\IDCRL$ are
incomparable varieties. In order to show it, first note that
$H_{3}^{\MV} \in \IDCRL$ and $H_{3}^{\MV} \notin \WH$ because
$a\odot a = 0$ and $a\we a \neq 0$. Finally note that $H_{3}^{\WH}
\in \WH$ and $H_{3}^{\WH} \notin \IDCRL$, which follows from the
fact that $1\leq 1 = 1 \ra a$ and $1\we 1 = 1 \nleq a$.
\vspace{3pt}

We have the following picture: \vspace{3pt}

\centerline{
 \xymatrix{
            &  & \V \\
            & \WH \edge{ur} & & \IDCRL \edge{ul}
} }
\vspace{3pt}

The following elementary lemma allows us to give an alternative
presentation for $\V s$.

\begin{lem} \label{lemaprod}
Let $A$ be an algebra of type $(2,2,2,2,0)$ which satisfies the
conditions $1), \ldots,  8)$ of Definition \ref{d1}. The following
conditions are equivalent:
\begin{enumerate}[\normalfont \hspace{2mm} 1)]
\item $A$ satisfies condition $9)$ of Definition \ref{d1}.
\item For every $a,b,c,d\in A$, $(a \ra b)\cd (c\ra d)\leq (a\cd
c)\ra (b\cd d)$.
\end{enumerate}
\end{lem}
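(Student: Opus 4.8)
The plan is to prove the two implications separately, noting that item 2) is formally stronger than item 1) (it has an independent implication $c\ra d$ in place of $c\ra c$), so the real content lies in the converse. The implication from 2) to 1) is immediate: specializing the inequality in item 2) by taking $d := c$ gives $(a\ra b)\cd(c\ra c)\leq (a\cd c)\ra(b\cd c)$. By condition $(6)$ of Definition \ref{d1} we have $c\ra c = 1$, and since $1$ is the monoid unit by condition $(3)$, the left-hand side collapses to $a\ra b$. This is precisely condition $(9)$, i.e.\ item 1).

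For the converse I would first record that under conditions $(1)$--$(8)$ the product is monotone in each argument. Indeed, if $a\leq b$ then $b = a\vee b$, so condition $(7)$ gives $b\cd c = (a\vee b)\cd c = (a\cd c)\vee(b\cd c)$, whence $a\cd c\leq b\cd c$; by commutativity the same holds in the other coordinate. I will use this together with the monoid commutativity throughout.

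The heart of the argument is to bridge from $a\cd c$ to $b\cd d$ through the intermediate element $b\cd c$ and then invoke the transitivity-type inequality $(8)$. Concretely, I would apply condition $(9)$ twice: once with multiplier $c$ to obtain $a\ra b\leq (a\cd c)\ra(b\cd c)$, and once with multiplier $b$, using commutativity to rewrite $c\cd b$ as $b\cd c$, to obtain $c\ra d\leq (b\cd c)\ra(b\cd d)$. Multiplying these two inequalities using monotonicity of the product yields $(a\ra b)\cd(c\ra d)\leq [(a\cd c)\ra(b\cd c)]\cd[(b\cd c)\ra(b\cd d)]$. Finally, condition $(8)$ applied with the three elements $a\cd c$, $b\cd c$, $b\cd d$ bounds the right-hand side above by $(a\cd c)\ra(b\cd d)$, and chaining the inequalities gives item 2). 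The main obstacle is simply recognizing the correct intermediate element $b\cd c$ interpolating between $a\cd c$ and $b\cd d$, so that the two instances of $(9)$ align as consecutive implications to which $(8)$ applies; once this choice is made, the rest reduces to monotonicity and the monoid identities, and no delicate estimates are required.
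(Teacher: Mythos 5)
Your proof is correct and follows essentially the same route as the paper's: for $1)\Rightarrow 2)$ you use condition $9)$ twice (with the same intermediate element $b\cd c$, i.e.\ $a\ra b\leq (a\cd c)\ra(b\cd c)$ and $c\ra d\leq (b\cd c)\ra(b\cd d)$), multiply, and apply the transitivity inequality $8)$; for $2)\Rightarrow 1)$ you specialize $d:=c$ and use $c\ra c=1$ together with the monoid unit, exactly as the paper does. The only difference is that you make explicit the monotonicity of $\cd$ (derived from condition $7)$), which the paper uses implicitly, having listed it among the basic properties following Definition \ref{d1}.
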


\begin{proof}
Assume  condition $9)$. We have that $a\ra b \leq (a\cd c) \ra
(b\cd c)$ and $c\ra d \leq (b \cd c) \ra (b\cd d)$. Then
\[
\begin{array}
[c]{lllll}
 (a\ra b)\cd (c\ra d) & \leq & ((a\cd c) \ra (b\cd c))\cd ((b\cd c) \ra (b\cd d)) &  & \\
 & \leq & (a\cd c)\ra (b\cd d). &  &
\end{array}
\]
Conversely, suppose that for every $a,b,c,d \in A$ the inequality
$(a \ra b)\cd (c\ra d)\leq (a\cd c)\ra (b\cd d)$ is satisfied.
Then
\[
\begin{array}
[c]{lllll}
 a\ra b & = & (a\ra b)\cd 1 &  & \\
  & = & (a\ra b)\cd (c\ra c)  &  &\\
 & \leq & (a\cd c)\ra (b\cd c), &  &
\end{array}
\]
which was our aim.
\end{proof}

\section{Principal congruences} \label{SPC}

In \cite{CJ} it was proved that $\WH$ does not have EDPC. Hence,
$\V$ does not have EDPC either. In this section we prove that
$\V$ has locally equationally definable principal congruences.

\begin{defn}
A variety of algebras $\mathsf{V}$ has \emph{locally equationally
definable principal congruences} if there exists a finite family
of quaternary terms $\{u_{(i,n,k)},v_{(i,n,k)}\}_{i=1}^{r}$ (with
$n, k\geq 0$) such that for every $A \in \mathsf{V}$ and $a, b \in
A$ it holds that for every $c, d \in A$, $(c,d) \in \theta(a,b)$
if and only if there exist $n,k\geq 0$ such that $u_{(i,n,k)}(a,
b, c, d) = v_{(i,n,k)}(a, b, c, d)$ for every $i = 1, \dots r$.
\end{defn}

We
start with some preliminary definitions and technical results.
\vspace{1pt}

If $A$ is an algebra, we denote by $\ConA$ the lattice of the
congruences of $A$. We refer by $\DL$  to the variety of distributive
lattices \cite{BD,BS}. The following lemma
is part of the folklore of distributive lattices.

\begin{lem} \label{l1}
Let $A \in \DL$, $\theta \in \ConA$ and $a,b \in A$. If there is
$c\in A$ such that $(a\we c,b\we c) \in \theta$ and $(a\vee c,
b\vee c) \in \theta$, then $(a,b) \in \theta$.
\end{lem}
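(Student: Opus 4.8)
The plan is to exploit the fact that $\theta$, being a congruence of the lattice reduct, is compatible with both $\we$ and $\vee$, and to combine this with the absorption laws and distributivity so as to build a $\theta$-chain linking $a$ to $b$. The guiding idea is that $a$ can be recovered as $a\we(a\vee c)$ and $b$ as $b\we(b\vee c)$; the two hypotheses $(a\we c, b\we c)\in\theta$ and $(a\vee c, b\vee c)\in\theta$ then let us transport one of these expressions into the other by a sequence of congruence-preserving substitutions, re-factoring with distributivity at each stage.

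Concretely, I would proceed as follows. First, using $(a\vee c, b\vee c)\in\theta$ and compatibility of $\we$, meeting with $a$ gives $(a\we(a\vee c),\, a\we(b\vee c))\in\theta$, and by absorption the left side is $a$, so $(a,\, a\we(b\vee c))\in\theta$. Distributivity rewrites $a\we(b\vee c)$ as $(a\we b)\vee(a\we c)$. Next, I would apply $(a\we c, b\we c)\in\theta$ and compatibility of $\vee$, joining with $a\we b$, to obtain $\bigl((a\we b)\vee(a\we c),\,(a\we b)\vee(b\we c)\bigr)\in\theta$; a second use of distributivity identifies $(a\we b)\vee(b\we c)$ with $b\we(a\vee c)$. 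Finally, invoking $(a\vee c, b\vee c)\in\theta$ once more and meeting with $b$ yields $(b\we(a\vee c),\, b\we(b\vee c))\in\theta$, whose right side is $b$ by absorption. Chaining these memberships by transitivity of $\theta$ delivers $(a,b)\in\theta$.

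The argument is essentially a bookkeeping exercise, so there is no serious obstacle; the only point requiring care is verifying that each substitution is a legitimate congruence step (i.e.\ that at every stage I am applying compatibility of $\we$ or $\vee$ to a pair already known to lie in $\theta$) and that the two distributive rewritings $a\we(b\vee c)=(a\we b)\vee(a\we c)$ and $(a\we b)\vee(b\we c)=b\we(a\vee c)$ are applied on the correct sides. Since $A\in\DL$, distributivity is available, and transitivity of $\theta$ closes the chain, completing the proof.
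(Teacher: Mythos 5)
Your proof is correct. The paper offers no proof of this lemma at all---it is stated as ``part of the folklore of distributive lattices''---so there is nothing to compare against; your chain $a = a\we(a\vee c) \mathrel{\theta} a\we(b\vee c) = (a\we b)\vee(a\we c) \mathrel{\theta} (a\we b)\vee(b\we c) = b\we(a\vee c) \mathrel{\theta} b\we(b\vee c) = b$ is exactly the standard argument, with distributivity used correctly in the two rewriting steps and each congruence step a legitimate application of compatibility.
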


\begin{lem} \label{l2}
Let $A\in \DL$, $\theta \in \ConA$, $(a,b) \in \theta$ and $c, d \in A$.
\[
\text{If
$(c\we a \we b,d\we a\we b), (c\vee a\vee b,d\vee a
\vee b)\in \theta$,  then $(c,d)\in \theta$.}
\]
\end{lem}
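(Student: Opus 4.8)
The plan is to reduce the statement to Lemma \ref{l1}. That lemma concludes $(c,d)\in\theta$ as soon as there is a \emph{single} element $e$ with $(c\we e,d\we e)\in\theta$ and $(c\vee e,d\vee e)\in\theta$. The two hypotheses of Lemma \ref{l2}, however, use different elements on the two sides ($a\we b$ in the meet, $a\vee b$ in the join), so the whole point is to reconcile them into one element, and the natural candidate is $e=a\we b$.

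First I would record a consequence of $(a,b)\in\theta$: since $\theta$ is a congruence and is compatible with $\we$ and $\vee$, replacing $b$ by $a$ gives $(a\we b,\,a\we a)=(a\we b,a)\in\theta$ and $(a\vee b,\,a\vee a)=(a\vee b,a)\in\theta$. By transitivity this yields the key fact $(a\we b,\,a\vee b)\in\theta$, i.e.\ the meet and the join of $a$ and $b$ are identified modulo $\theta$.

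Next I would use this fact to rewrite the join hypothesis in terms of the meet. From $(a\we b,a\vee b)\in\theta$ and compatibility of $\vee$ I get $(c\vee(a\we b),\,c\vee(a\vee b))\in\theta$ and $(d\vee(a\we b),\,d\vee(a\vee b))\in\theta$. Composing these with the given $(c\vee a\vee b,\,d\vee a\vee b)\in\theta$ produces $(c\vee(a\we b),\,d\vee(a\we b))\in\theta$. Now both this and the untouched meet hypothesis $(c\we a\we b,\,d\we a\we b)\in\theta$ refer to the same element $e=a\we b$, so Lemma \ref{l1} applies and delivers $(c,d)\in\theta$, as required.

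There is essentially no genuine obstacle in this argument; the only step needing care is the bookkeeping in the middle paragraph, where the congruence $(a\we b,\,a\vee b)\in\theta$ is used to transport the join hypothesis off the element $a\vee b$ and onto $a\we b$, so that a single element feeds into Lemma \ref{l1}. Everything else is routine use of the congruence properties of $\theta$.
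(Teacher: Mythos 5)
Your proof is correct and is essentially identical to the paper's: both derive $(a\we b,\,a\vee b)\in\theta$ from $(a,b)\in\theta$, use it to transport the join hypothesis onto the element $a\we b$, and then apply Lemma \ref{l1} with that single element. The only (immaterial) difference is that the paper obtains $(a\we b,\,a\vee b)\in\theta$ by relating both terms to $b$, whereas you relate them to $a$.
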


\begin{proof}
Since $(a,b) \in \theta$, then $(a\we b,b)\in \theta$ and $(a\vee
b,b) \in \theta$. Thus, $(a\we b,a\vee b) \in \theta$. This
implies that  $(c\vee (a\we b), c\vee a\vee b) \in
\theta$ and $(d\vee (a\we b), d\vee a \vee b)\in \theta$. Since by
hypothesis $(c\vee a\vee b,d\vee a \vee b) \in \theta$ then
\begin{equation} \label{dl1}
(c\vee (a\we b),d\vee (a\we b))\in \theta.
\end{equation}
By hypothesis we also have that
\begin{equation} \label{dl2}
(c\we (a\we b),d\we (a\we b)) \in \theta.
\end{equation}
Therefore, it follows from (\ref{dl1}), (\ref{dl2}) and Lemma
\ref{l1} that $(c,d)\in \theta$.
\end{proof}

Let $A\in \V$, $n\geq 1$ and $a, b\in A$. We define inductively
$a^n$ by setting $a^{0} := 1$ and $a^{n} := a \cd a^{n-1}$. We
also define $\square^{0}(a) = a$, $\square(a) = 1\ra a$ and the
iterated operation $\square^{n}$ in the usual way. We also define
\[
a\rla b := (a\ra b) \we (b\ra a),
\]
\[
t_{n}(a,b) := \square^{0}(a\rla b) \we \square(a\rla b)\we \cdots
\we \square^{n}(a\rla b).
\]

\begin{rem}
\label{rem:remark-1}
Let $A\in \V$, $n\geq 1$ and $a,b\in A$.
\begin{enumerate} [\normalfont \hspace{2mm} a)]
\item If $k$ is a natural number, we write $t_{n}^{k}(a,b)$ in
place of $(t_{n}(a,b))^{k}$. \item The map $\square$ preserves
finite meets. In particular $\square$ is monotonic, i.e., if
$a\leq b$ then $\square(a)\leq \square(b)$.
\end{enumerate}
\end{rem}

We will prove in Theorem \ref{PT} that  $\V$ has locally
equationally definable principal congruences by the set QT of the
following quaternary terms: \vspace{2mm}

\begin{itemize}
\item[-] $u_{1,n,k}(x_1,x_2,y_1,y_2) := (t_{n}^{k}(x_1,x_2) \cd
(y_1\we x_1 \we x_2)) \vee (y_2 \we x_1 \we x_2)$, \item[-]
$v_{1,n,k}(x_1,x_2,y_1,y_2) := y_2 \we x_1 \we x_2$, \item[-]
$u_{2,n,k}(x_1,x_2,y_1,y_2) := (t_{n}^{k}(x_1,x_2) \cd (y_2\we x_1
\we x_2)) \vee (y_1 \we x_1 \we x_2)$, \item[-]
$v_{2,n,k}(x_1,x_2,y_1,y_2) := y_1 \we x_1 \we x_2$, \item[-]
$u_{3,n,k}(x_1,x_2,y_1,y_2) := (t_{n}^{k}(x_1,x_2) \cd (y_1\vee
x_1 \vee x_2)) \vee (y_2 \vee x_1 \vee x_2)$, \item[-]
$v_{3,n,k}(x_1,x_2,y_1,y_2) := y_2 \vee x_1 \vee x_2$, \item[-]
$u_{4,n,k}(x_1,x_2,y_1,y_2) := (t_{n}^{k}(x_1,x_2) \cd (y_2\vee
x_1\vee x_2)) \vee (y_1 \vee x_1 \vee x_2)$, \item[-]
$v_{4,n,k}(x_1,x_2,y_1,y_2) := y_1 \vee x_1 \vee x_2$, \item[-]
$u_{5,n,k}(x_1,x_2,y_1,y_2) := t_{n}^{k}(x_1,x_2) \vee (y_1\rla
y_2)$, \item[-] $v_{5,n,k}(x_1,x_2,y_1,y_2) := y_1\rla
y_2$.\vspace{1mm}
\end{itemize}
with $n, k$ natural numbers.
\vspace{3mm}

To achieve our goal,  for any $A\in \V$ and $a,b\in A$ we define
the binary relation $R(a,b)$ as follows: $(c,d) \in R(a,b)$ if and
only if there are natural numbers $n$ and $k$ that satisfy the
following conditions:
\begin{enumerate}
\item[(C1)] $t_{n}^{k}(a,b)\cd (c \we a \we b) \leq d \we a \we b$ and
$t_{n}^{k}(a,b)\cd (d \we a \we b) \leq c \we a \we b$,
\item[(C2)]
$t_{n}^{k}(a,b)\cd (c \vee a \vee b) \leq d \vee a \vee b$ and
$t_{n}^{k}(a,b)\cd (d \vee a \vee b) \leq c \vee a \vee b$,
\item[(C3)]
$t_{n}^{k}(a,b) \leq c\rla d$.
\end{enumerate}
We say that $(n,k)$ is \textit{a pair of natural numbers associated with}
$(c,d)$.

\begin{rem} \label{r1}
Let $A\in \V$ and $a,b\in A$.
\begin{enumerate}[\normalfont \hspace{2mm}  a)]
\item $t_{n+1}(a,b) \leq t_n(a,b)$ and $t_{n}^{k+1}(a,b) \leq
t_{n}^{k}(a,b)$. Hence, $t_{m}^{p}(a,b) \leq t_{n}^{q}(a,b)$
whenever $n\leq m$ and $q\leq p$. \item Let $(c,d), (c', d')  \in
R(a,b)$ and $(p,q)$, $(r,s)$ pairs of natural numbers associated
with $(c,d)$ and $(c',d')$ respectively. Consider $n:=
\mathrm{max}\{p,r\}$ and $k :=\mathrm{max}\{q,s\}$. It  follows
from the previous item that $(n,k)$ is a pair of natural numbers
associated with $(c,d)$ and $(c',d')$.
\end{enumerate}
\end{rem}

Let $A\in \DL$ and $\theta$ an equivalence relation on $A$. It is
easily seen that in this case  $\theta$ is a
congruence if and only if  the following two conditions hold:
\begin{enumerate}[\normalfont \hspace{2mm} 1)]
\item For every $a,b,c\in A$, if $(a,b)\in \theta$, then $(a\we
c,b\we c)\in \theta$. \item For every $a,b,c\in A$, if $(a,b)\in
\theta$, then $(a\vee c,b\vee c)\in \theta$.
\end{enumerate}

In the next lemma we will use the above mentioned property.

\begin{lem} \label{l3}
Let $A\in \V$ and $a,b \in A$. Then $(a,b) \in R(a,b)$ and
$R(a,b)$ is a congruence of the lattice reduct of $A$.
\end{lem}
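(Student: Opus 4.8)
The plan is to verify separately that $(a,b)\in R(a,b)$ and that $R(a,b)$ is an equivalence relation compatible with $\we$ and $\vee$, the compatibility being checked through the criterion for congruences of a distributive lattice recalled just before the lemma. Throughout I will use the consequences of Definition \ref{d1} listed after it, most notably $x\cd y\le x\we y$ (so that $x\cd y\le x$ and $x\cd y\le y$), the monotonicity of $\cd$ and of $\ra$ in each argument, and the derived fact that $x\le y$ implies $x\ra y=1$ (immediate from $x\ra x=1$ and monotonicity of $\ra$ in its second argument). For membership, taking $n=0$ and $k=1$ gives $t_{0}^{1}(a,b)=a\rla b$; then (C1) and (C2) hold because the product of $a\rla b$ with $a\we b$ (respectively $a\vee b$) lies below $a\we b$ (respectively $a\vee b$), while (C3) reduces to $a\rla b\le a\rla b$. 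Reflexivity is analogous with $n=k=0$, using $c\rla c=1$ for (C3); symmetry is immediate, since (C1) and (C2) are stated symmetrically in $c,d$ and $c\rla d=d\rla c$.

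For transitivity, suppose $(c,d),(d,e)\in R(a,b)$. By Remark \ref{r1} I may fix a single pair $(n,k)$ associated with both. The key move is to square: multiplying $t_{n}^{k}(a,b)\cd(c\we a\we b)\le d\we a\we b$ by $t_{n}^{k}(a,b)$ and invoking monotonicity of $\cd$ together with the inequality for $(d,e)$ yields $t_{n}^{2k}(a,b)\cd(c\we a\we b)\le e\we a\we b$, and likewise for the remaining inequalities of (C1) and (C2). For (C3), from $t_{n}^{k}(a,b)\le c\ra d$ and $t_{n}^{k}(a,b)\le d\ra e$ and condition $8$ of Definition \ref{d1}, namely $(c\ra d)\cd(d\ra e)\le c\ra e$, I get $t_{n}^{2k}(a,b)\le c\ra e$, and symmetrically $t_{n}^{2k}(a,b)\le e\ra c$; hence $(c,e)\in R(a,b)$ with the pair $(n,2k)$.

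For compatibility it suffices to prove that $(c,d)\in R(a,b)$ implies $(c\we f,d\we f)\in R(a,b)$ and $(c\vee f,d\vee f)\in R(a,b)$ for every $f$, and I expect the same pair $(n,k)$ to work in both cases. In the meet case, distributivity gives $(c\we f)\vee a\vee b=(c\vee a\vee b)\we(f\vee a\vee b)$, so $t_{n}^{k}(a,b)\cd((c\we f)\vee a\vee b)$ sits below both $t_{n}^{k}(a,b)\cd(c\vee a\vee b)\le d\vee a\vee b$ and $f\vee a\vee b$, hence below $(d\we f)\vee a\vee b$, which is (C2); condition (C1) is even simpler, using only $c\we f\we a\we b\le c\we a\we b$ and $\le f$. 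For (C3) one applies condition $4$ of Definition \ref{d1} together with $(c\we f)\ra f=1$ to reduce $(c\we f)\ra(d\we f)$ to $(c\we f)\ra d$, which dominates $c\ra d\ge t_{n}^{k}(a,b)$. The join case is entirely parallel, this time splitting the products by condition $7$ of Definition \ref{d1}, $(x\vee y)\cd z=(x\cd z)\vee(y\cd z)$, and using condition $5$ of Definition \ref{d1} for (C3).

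The step I expect to be the main obstacle is transitivity, where the monoidal structure forces the bookkeeping on the exponent (it must be doubled to $2k$) and one must check that chaining the inequalities through monotonicity of $\cd$ and condition $8$ is legitimate. The accompanying subtlety is to pick, in the compatibility arguments, the distributive rearrangement that reduces every summand either to a hypothesis on $(c,d)$ or to the inequality $x\cd y\le y$.
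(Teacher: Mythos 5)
Your proposal is correct and follows essentially the same route as the paper's proof: the same witness for $(a,b)\in R(a,b)$, transitivity by doubling the exponent to $2k$ and chaining through monotonicity of $\cd$ and condition 8, meet-compatibility via conditions 4 and distributivity, and join-compatibility via conditions 5 and 7. The only (harmless) difference is that you make explicit some steps the paper leaves as "immediate" or "similar," such as reflexivity with $k=0$ and the join case.
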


\begin{proof}
First we prove that $(a,b) \in R(a,b)$. Note that since  $a\rla b
\leq 1$, then $(a\rla b) \cd (a\we b) \leq a\we b$ and $(a\rla b)
\cd (a\vee b) \leq a\vee b$. Besides $a\rla b \leq a\rla b$. Thus
we have that conditions (C1), (C2) and (C3) in the definition of
$R(a, b)$ hold for $(a, b)$. Therefore, $(a,b) \in R(a,b)$. Now we
prove that $R(a,b)$ is a congruence of the lattice reduct of $A$.

(i) We will prove that $R(a,b)$ is an equivalence relation. The
reflexivity and the symmetry are immediate. In order to prove the
transitivity, consider $(c,d) \in R(a,b)$ and $(d,e) \in R(a,b)$.
It follows from Remark \ref{r1} that there is a pair of natural
numbers $(n,k)$ associated with $(c,d)$ and $(d,e)$. Then
\[
\begin{array}
[c]{lllll}
 t_{n}^{2k}(a,b)\cd (c\we a \we b)& = & t_{n}^{k}(a,b)\cd t_{n}^{k}(a,b)\cd (c\we a \we b) &  & \\
 & \leq & t_{n}^{k}(a,b)\cd (d\we a \we b)  &  &\\
 & \leq & e\we a\we b. &  &
\end{array}
\]
In a similar way it can be proved that $t_{n}^{2k}(a,b)\cd (e\we a
\we b)\leq  c\we a\we b$, $t_{n}^{2k}(a,b)\cd (c\vee a \vee b)
\leq e\vee a \vee b$,  and $t_{n}^{2k}(a,b)\cd (e\vee a \vee b) \leq
c\vee a \vee b$. Finally we will see that $t_{n}^{2k}(a,b) \leq
c\rla e$. Note that $t_{n}^{k}(a,b) \leq c\ra d$,
$t_{n}^{k}(a,b)\leq d\ra c$, $t_{n}^{k}(a,b)\leq d\ra e$ and
$t_{n}^{k}(a,b)\leq e\ra d$. Then
\[
\begin{array}
[c]{lllll}
 t_{n}^{2k}(a,b) & \leq &  (c\ra d)\cd(d\ra e) &  & \\
 & \leq & c\ra e. &  &
\end{array}
\]
In a similar way we can show that $t_{n}^{2k}(a,b) \leq e\ra c$.
Thus,
\[
t_{n}^{2k}(a,b) \leq (c\ra e)\we (e\ra c).
\]
Hence, $R(a,b)$ is a transitive relation.

(ii) Let $(c,d) \in R(a,b)$. We will prove that $(c\we e,d\we e)
\in R(a,b)$. We have that
\[
\begin{array}
[c]{lllll}
 t_{n}^{k}(a,b)\cd ((c\we e)\we a \we b) & \leq & t_{n}^{k}(a,b)\cd (c\we a \we b) &  & \\
 &\leq & d \we a \we b &  &
\end{array}
\]
and $ t_{n}^{k}(a,b)\cd ((c\we e)\we a \we b) \leq e$. Thus,
$t_{n}^{k}(a,b)\cd ((c\we e)\we a \we b) \leq (d\we e)\we a\we b$.
The same argument shows that $t_{n}^{k}(a,b)\cd ((d\we e)\we a \we
b) \leq (c\we e)\we a\we b$.

Now we will prove the inequality $t_{n}^{k}(a,b)\cd ((c\we e)\vee
a \vee b) \leq (d\we e)\vee a\vee b$. First note that
\[
\begin{array}
[c]{lllll}
 t_{n}^{k}(a,b)\cd ((c\we e)\vee a \vee b) & \leq & t_{n}^{k}(a,b)\cd (c\vee a \vee b) &  & \\
 &\leq & d \vee a \vee b. &  &
\end{array}
\]
We also have that
\[
t_{n}^{k}(a,b)\cd ((c\we e)\vee a \vee b)  \leq e \vee a \vee b.
\]
Then,
\[
t_{n}^{k}(a,b)\cd ((c\we e)\vee a \vee b) \leq (d\vee a \vee b)\we
(e\vee a \vee b).
\]
Taking into account the distributivity of the underlying lattice
of $A$ we obtain
\[
(d\vee a \vee b)\we (e\vee a \vee b) = (d\we e) \vee a \vee b.
\]
Hence, $t_{n}^{k}(a,b)\cd ((c\we e)\vee a \vee b) \leq (d\we e)
\vee a \vee b$. We also have that
\[
t_{n}^{k}(a,b)\cd ((d\we e)\vee a \vee b) \leq (c\we e) \vee a
\vee b.
\]
In what follows we will prove the inequality $t_{n}^{k}(a,b) \leq
(c\we e) \rla (d\we e)$. It is enough to prove the inequality
$c\rla d \leq (c\we e)\rla (d\we e)$. First note that $(c\we e)
\ra (d\we e) = ((c\we e)\ra d)\we ((c\we e)\ra e)$. Since $c\we e
\leq c$, then $(c\we e) \ra d \geq c\ra d$ and since $c\we e \leq
e$, then $(c\we e) \ra e \geq e\ra e = 1$. Hence,
\[
\begin{array}
[c]{lllll}
 (c\we e)\ra (d\we e) & = & ((c\we e)\ra d)\we ((d\we e)\ra e)  &  & \\
 & \geq & c\ra d. &  &
\end{array}
\]
Analogously we obtain that $(d\we e)\ra (c\we e) \geq d\ra c$.
Thus $c\rla d\leq (c\we e)\rla (d \we e)$. Thus, we have proved
that $(c\we e,d\we e) \in R(a,b)$.

(iii) The fact $(c\vee e,d \vee e)\in R(a,b)$ whenever $(c,d)\in
R(a,b)$ can be proved using similar ideas to the ones employed for
the case (ii). Now in addition we use the property $(c\vee d)\cd e
= (c \cd e) \vee (d\cd e)$ for every $c,d,e \in A$.
\end{proof}

The following lemma will play a fundamental role.

\begin{lem} \label{il}
Let $A\in \V$ and $a\in A$. Then $(1\ra a)^{n} \leq 1\ra a^{n}$
for every $n$.
\end{lem}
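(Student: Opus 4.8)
The plan is to argue by induction on $n$, with the crux being a single application of Lemma \ref{lemaprod}. For the base case $n=0$ both sides equal $1$, since $(1\ra a)^{0}=1$ by definition and $1\ra a^{0}=1\ra 1=1$ by condition $(6)$ of Definition \ref{d1}; the case $n=1$ is just the identity $1\ra a\leq 1\ra a$.

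For the inductive step, suppose $(1\ra a)^{n}\leq 1\ra a^{n}$. First I would rewrite $(1\ra a)^{n+1}=(1\ra a)\cd (1\ra a)^{n}$ and use the monotonicity of $\cd$ (item $1)$ of the list following Definition \ref{d1}, together with commutativity of the monoid) and the inductive hypothesis to obtain $(1\ra a)^{n+1}\leq (1\ra a)\cd (1\ra a^{n})$.

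The key step is then to bound the right-hand side. Applying Lemma \ref{lemaprod} with the substitution $a\mapsto 1$, $b\mapsto a$, $c\mapsto 1$, $d\mapsto a^{n}$ gives
\[
(1\ra a)\cd (1\ra a^{n})\leq (1\cd 1)\ra (a\cd a^{n})=1\ra a^{n+1},
\]
where I use that $1$ is the monoid unit, so $1\cd 1=1$, and that $a\cd a^{n}=a^{n+1}$ by definition. Chaining the two inequalities yields $(1\ra a)^{n+1}\leq 1\ra a^{n+1}$, which closes the induction.

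I do not expect a genuine obstacle here: once one recognizes that Lemma \ref{lemaprod} is precisely the statement that the product of two implications is dominated by the implication of the products, the result is immediate. The only point requiring a little care is the \emph{order} of the two estimates in the inductive step: the inductive hypothesis must be pushed through the monotonicity of $\cd$ \emph{before} invoking Lemma \ref{lemaprod}, rather than afterwards.
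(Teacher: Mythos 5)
Your proof is correct and takes essentially the same route as the paper: the paper also argues by induction on $n$, with the inductive step resting on the inequality $(1\ra a^{n})\cd (1\ra a) \leq (1\cd 1) \ra (a \cd a^{n}) = 1\ra a^{n+1}$, i.e.\ exactly your application of Lemma \ref{lemaprod}, combined with monotonicity of $\cd$ to insert the inductive hypothesis first. The only differences are cosmetic (order of the factors, which commutativity makes irrelevant).
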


\begin{proof}
Notice that for every $n$ the we have that
\[
(1\ra a^{n})\cd (1\ra a) \leq (1\cd 1) \ra (a \cd a^{n}),
\]
i.e.,
\begin{equation} \label{feq}
(1\ra a^{n})\cd (1\ra a) \leq 1 \ra a^{n+1}.
\end{equation}

Also notice that $(1\ra a)^{0} \leq 1\ra a^{0}$ and $(1\ra a)^{1}
\leq 1\ra a^{1}$. Assume that $(1\ra a)^{n} \leq 1\ra a^{n}$ for
some $n$. Taking into account (\ref{feq}) we prove the inequality
$(1\ra a)^{n+1} \leq 1\ra a^{n+1}$ as follows:
\[
\begin{array}
[c]{lllll}
(1\ra a)^{n+1}& = & (1\ra a)^{n}\cd (1\ra a) &  & \\
 & \leq & (1\ra a^{n})\cd (1\ra a)  &  &\\
 & \leq & 1\ra a^{n+1}. &  &
\end{array}
\]
\end{proof}

Let $A\in \V$ and $a,b \in A$. Let $(c,d),(u, w) \in R(a,b)$. It
follows from Remark \ref{r1} that there exists a pair of natural
numbers $(n,k)$ associated with $(c,d)$ and $(u,w)$. In
particular, $(n,2k)$ is also a pair of natural numbers associated
with $(c,d)$ and $(u,w)$. The pair $(n,2k)$ will be considered in
the following two lemmas.

\begin{lem} \label{l4}
Let $A\in \V$ and $a,b \in A$. Let $(c,d),(u, w) \in R(a,b)$.
There are natural numbers $n$ and $k$ that satisfy the following
conditions:
\begin{enumerate}[\normalfont \hspace{2mm} 1)]
\item$t_{n}^{2k}(a,b)\cd (c\ra u) \leq d\ra w$,
\item
$t_{n}^{2k}(a,b)\cd ((c\ra u) \we a \we b) \leq (d\ra w)\we a \we
b$,
\item $t_{n}^{2k}(a,b)\cd ((d\ra w) \we a \we b) \leq (c\ra
u)\we a \we b$,
\item $t_{n}^{2k}(a,b)\cd ((c\ra u) \vee a \vee b)
\leq (d\ra w)\vee a \vee b$,
 \item  $t_{n}^{2k}(a,b)\cd ((d\ra w)
\vee a \vee b) \leq (c\ra u)\vee a \vee b$,
 \item
$t_{n+1}^{2k}(a,b) \leq (c \ra u) \rla (d\ra w)$.
\end{enumerate}
\end{lem}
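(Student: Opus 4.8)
The plan is to prove each of the six inequalities by exploiting the monotonicity and compatibility properties of $\cd$ and $\ra$ that are built into the definition of $\V$, together with the hypotheses that $(c,d), (u,w) \in R(a,b)$. The central tool will be condition $9)$ of Definition \ref{d1} in its strengthened form from Lemma \ref{lemaprod}, namely $(x \ra y) \cd (z \ra v) \leq (x \cd z) \ra (y \cd v)$, since this is exactly what converts products of implications into an implication of products. Since $(c,d), (u,w) \in R(a,b)$ with associated pair $(n,k)$, condition (C3) gives $t_n^k(a,b) \leq c \rla d$ and $t_n^k(a,b) \leq u \rla w$; in particular $t_n^k(a,b) \leq d \ra c$ and $t_n^k(a,b) \leq u \ra w$. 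Raising to the relevant power and using Remark \ref{r1}(a) to keep the exponents aligned is what produces the $t_n^{2k}$ appearing throughout.

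\textbf{The key inequality (item 1).}
I would start with item $1)$, as the other inequalities build on it. Using $t_n^k(a,b) \leq d \ra c$ and $t_n^k(a,b) \leq u \ra w$, I would multiply these two inequalities (monotonicity of $\cd$) to obtain $t_n^{2k}(a,b) \leq (d \ra c)\cd(u \ra w)$. Then, applying the two-variable residuation inequality from Lemma \ref{lemaprod} with the substitution $x := d$, $y := c$, $z := u$, $v := w$, gives $(d \ra c)\cd(u \ra w) \leq (d \cd u) \ra (c \cd w)$. The subtlety is that item $1)$ asks for $t_n^{2k}(a,b) \cd (c \ra u) \leq d \ra w$, so I actually want to combine $c \ra u$ (not $u \ra w$) with two copies of $t$ arranged so that the middle terms telescope. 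The correct route is: $t_n^k(a,b) \leq d \ra c$ and $t_n^k(a,b) \leq w \ra d$ are \emph{not} what I want; rather I pair $d \ra c$, then $c \ra u$, then $u \ra w$. Concretely, I would show $t_n^{2k}(a,b)\cd(c\ra u) \leq (d\ra c)\cd(c\ra u)\cd(u\ra w)$ and then apply condition $8)$ of Definition \ref{d1}, $(x\ra y)\cd(y\ra z)\leq x\ra z$, twice to collapse $(d\ra c)\cd(c\ra u)\cd(u\ra w) \leq d \ra w$. This telescoping through $c$ and $u$ is the heart of the argument.

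\textbf{Items 2)--5): restricting to the meet and join.}
Once item $1)$ holds, items $2)$ and $4)$ follow by intersecting with $a \we b$ respectively joining with $a \vee b$, using that $\cd$ distributes over $\vee$ (condition $7)$) and that $x \cd y \leq x \we y$ (property $2)$ following Definition \ref{d1}) to control the extra terms, exactly as in the proof of Lemma \ref{l3}, part (ii). For items $3)$ and $5)$ I would run the symmetric argument with the roles of $(c,d)$ and $(u,w)$ reversed, deriving first $t_n^{2k}(a,b)\cd(d\ra w)\leq c\ra u$ by telescoping $(c\ra d)\cd(d\ra w)\cd(w\ra u) \leq c \ra u$, and then again restricting to the meet and join. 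These four items are routine given item $1)$ and the techniques already displayed in Lemma \ref{l3}.

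\textbf{Item 6) and the main obstacle.}
The last inequality $t_{n+1}^{2k}(a,b) \leq (c \ra u) \rla (d \ra w)$ is where the index shift from $n$ to $n+1$ enters, and this is the step I expect to be the main obstacle. Unlike the first five items, item $6)$ concerns an implication between two implications, so I cannot simply telescope; instead I must apply the box operator $\square = 1 \ra (-)$ and invoke Lemma \ref{il}, which says $(1\ra a)^n \leq 1 \ra a^n$, to push a power of $t_n^k(a,b)$ inside a $\square$. The idea is that $t_{n+1}^{2k}(a,b)$ contains the extra factor $\square^{n+1}(a \rla b)$ precisely so that, after applying $\square$ to the inequality from item $1)$ (which lives at level $n$), the monotonicity of $\square$ (Remark \ref{rem:remark-1}(b)) and the inequality of Lemma \ref{il} let me absorb the resulting $\square\big(t_n^{2k}(a,b)\big)$-type term back into $t_{n+1}^{2k}(a,b)$. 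Establishing both directions of the $\rla$ (that is, both $t_{n+1}^{2k}(a,b)\leq (c\ra u)\ra(d\ra w)$ and the reverse implication) and correctly bookkeeping why the single extra box raises the index by exactly one will require care, and this is the genuinely delicate part of the lemma.
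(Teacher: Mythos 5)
Your treatment of items 1)--5) coincides with the paper's proof. For item 1) you bound $t_{n}^{2k}(a,b)$ by $(d\ra c)\cd (u\ra w)$ using (C3), multiply by $c\ra u$, and telescope $(d\ra c)\cd (c\ra u)\cd (u\ra w)\leq d\ra w$ via two applications of condition 8) of Definition \ref{d1}; items 2)--5) then follow by meeting with $a\we b$, respectively joining with $a\vee b$ (using $x\cd y\leq y$ and distributivity of $\cd$ over $\vee$), with the symmetric telescoping $(c\ra d)\cd(d\ra w)\cd(w\ra u)\leq c\ra u$ for 3) and 5). This is exactly what the paper does.

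The gap is in item 6). You correctly obtain $t_{n+1}^{2k}(a,b)\leq \square(t_{n}^{2k}(a,b))$ -- via $t_{n+1}(a,b)\leq \square(t_{n}(a,b))$, because $\square$ preserves finite meets, and then Lemma \ref{il} -- but your bridging step, ``applying $\square$ to the inequality from item 1)'', does not deliver the target. Monotonicity of $\square$ applied to item 1) gives $\square(t_{n}^{2k}(a,b)\cd (c\ra u))\leq \square(d\ra w)$, and although $\square(d\ra w)\leq (c\ra u)\ra (d\ra w)$ does hold (antitonicity in the first argument, since $c\ra u\leq 1$), you cannot pass from $\square(t_{n}^{2k}(a,b))$ to $\square(t_{n}^{2k}(a,b)\cd (c\ra u))$: the product lies \emph{below} $t_{n}^{2k}(a,b)$, so monotonicity of $\square$ puts the boxed product on the wrong side, and your ``absorption'' stalls. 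The missing ingredient is property 4) in the list following Definition \ref{d1}, namely $1\ra a\leq b\ra (a\cd b)$, which is the variety's surrogate for residuation. With $a=t_{n}^{2k}(a,b)$ and $b=c\ra u$ it yields $\square(t_{n}^{2k}(a,b))\leq (c\ra u)\ra (t_{n}^{2k}(a,b)\cd (c\ra u))$, i.e.\ it performs the multiplication by $c\ra u$ inside the implication for free; then monotonicity of $\ra$ in its second argument together with item 1) gives $(c\ra u)\ra (t_{n}^{2k}(a,b)\cd (c\ra u))\leq (c\ra u)\ra (d\ra w)$, and chaining with $t_{n+1}^{2k}(a,b)\leq \square(t_{n}^{2k}(a,b))$ finishes one direction of the $\rla$; the other direction is symmetric. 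Without invoking this inequality, your plan breaks down precisely at the step you yourself flagged as the delicate one.
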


\begin{proof}
First note that $t_{n}^{k}(a,b)\leq c\ra d, d\ra c, u\ra w, w\ra
u$. Then
\[
\begin{array}
[c]{lllll}
 t_{n}^{2k}(a,b)\cd (c\ra u) & = & t_{n}^{k}(a,b)\cd t_{n}^{k}(a,b)\cd (c\ra u) &  & \\
 & \leq & (d\ra c)\cd (u\ra w) \cd (c\ra u)  &  &\\
 & = & (d\ra c)\cd (c\ra u) \cd (u\ra w)  &  &\\
 & \leq & d \ra w. &  &
\end{array}
\]
Thus,
\begin{equation} \label{eq1}
t_{n}^{2k}(a,b)\cd (c\ra u) \leq d\ra w,
\end{equation}
which is  condition 1).

We will prove that condition 1) implies condition 2). First note
that
\[
\begin{array}
[c]{lllll}
 t_{n}^{2k}(a,b)\cd ((c\ra u)\we a \we b) & \leq & t_{n}^{2k}(a,b)\cd (c\ra u)&  & \\
 & \leq & d \ra w. &  &
\end{array}
\]
Besides,
\[
t_{n}^{2k}(a,b)\cd ((c\ra u)\we a \we b)  \leq a\we b
\]
Then $t_{n}^{2k}(a,b)\cd ((c\ra u)\we a \we b)\leq (d \ra w)\we a
\we b$, which is condition 2). Similarly it can be proved that
\[
t_{n}^{2k}(a,b)\cd ((d\ra w) \we a \we b) \leq (c\ra u)\we a \we
b,
\]
which is condition 3).

Now we will prove the condition 4). It follows from (\ref{eq1})
that
\[
\begin{array}
[c]{lllll}
 t_{n}^{2k}(a,b)\cd ((c\ra u)\vee a \vee b) & = & (t_{n}^{2k}(a,b)\cd (c\ra u)) \vee (t_{n}^{2k}(a,b))\cd (a\vee b)) &  & \\
 & \leq & (d \ra w) \vee a \vee b, &  &
\end{array}
\]
which is  condition 4). Analogously we can show that
\[
t_{n}^{2k}(a,b)\cd ((d\ra w) \vee a \vee b) \leq (c\ra u)\vee a
\vee b,
\]
i.e.,  condition 5).

Finally we will prove  condition 6). It follows from display 
(\ref{eq1}) that
\begin{equation} \label{eq2}
(c\ra u) \ra ((c\ra u)\cd t_{n}^{2k}(a,b)) \leq (c\ra u) \ra (d\ra
w).
\end{equation}
Taking into account the inequality $1 \ra c_1 \leq c_2 \ra (c_1
\cd c_2)$ with $c_1 = t_{n}^{2k}(a,b)$ and $c_2 = c\ra u$ we
obtain that
\begin{equation} \label{eq3}
1 \ra t_{n}^{2k}(a,b) \leq (c\ra u) \ra ((c\ra u)\cd
t_{n}^{2k}(a,b)).
\end{equation}
Besides, it follows from Lemma \ref{il} that
\begin{equation} \label{eq4}
(1\ra t_{n}(a,b))^{2k} \leq 1 \ra t_{n}^{2k}(a,b).
\end{equation}
Since $t_{n+1}(a,b) \leq \square(a\rla b) \we \cdots \we
\square^{n+1}(a\rla b)$ and
\[
\square(a\rla b) \we \cdots \we \square^{n+1}(a\rla b) = 1 \ra
t_{n}(a,b),
\]
then $t_{n+1}(a,b)  \leq 1 \ra t_{n}(a,b)$, so $t_{n+1}^{2k}(a,b)
\leq (1\ra t_{n}(a,b))^{2k}$. Thus, it follows from (\ref{eq2}),
(\ref{eq3}) and (\ref{eq4}) that $t_{n+1}^{2k}(a,b) \leq (c \ra u)
\ra (d\ra w)$. Similarly we can show the inequality
$t_{n+1}^{2k}(a,b) \leq (d \ra w) \ra (c\ra u)$. Therefore,
\[
t_{n+1}^{2k}(a,b) \leq (c \ra u) \rla (d\ra w).
\]
\end{proof}

\begin{lem} \label{l5}
Let $A\in \V$ and $a,b \in A$. Let $(c,d)\in R(a,b)$ and $(u,w)\in
R(a,b)$. There exist natural numbers $n$ and $k$ that satisfy the
following conditions:
\begin{enumerate}[\normalfont \hspace{2mm} 1)]
\item $t_{n}^{2k}(a,b)((c\cd u)\we a \we b) \leq (d\cd w)\we a \we
b$, \item  $t_{n}^{2k}(a,b)((d\cd w)\we a \we b) \leq (c\cd u)\we a
\we b$, \item  $t_{n}^{2k}(a,b)((c\cd u)\vee a \vee b) \leq (d\cd
w)\vee a \vee b$, \item  $t_{n}^{2k}(a,b)((d\cd w)\vee a \vee b)
\leq (c\cd u)\vee a \vee b$, \item  $t_{n}^{2k}(a,b) \leq (c\cd
u)\rla (d\cd w)$.
\end{enumerate}
\end{lem}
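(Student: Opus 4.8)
The plan is to read conditions 1)--5) as exactly the clauses (C1), (C2), (C3) in the definition of $R(a,b)$ applied to the pair $(c\cd u, d\cd w)$, so that the lemma amounts to showing that $R(a,b)$ is compatible with $\cd$. Following the remark preceding Lemma \ref{l4}, I first fix a pair $(n,k)$ associated with both $(c,d)$ and $(u,w)$ (Remark \ref{r1}) and abbreviate $s := t_{n}^{k}(a,b)$, so that $t_{n}^{2k}(a,b)=s\cd s$. From $(c,d),(u,w)\in R(a,b)$ I record the facts I will use repeatedly: $s\le c\ra d,\ d\ra c,\ u\ra w,\ w\ra u$ (from (C3)); the four meet inequalities $s\cd(c\we a\we b)\le d\we a\we b$, and so on (from (C1)); and, after dropping the joins, $s\cd c\le d\vee a\vee b$ and $s\cd u\le w\vee a\vee b$ together with their companions (from (C2)).

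Condition 5) is the easiest and I would prove it first, as the $\cd$-analogue of clause 6) of Lemma \ref{l4}, but \emph{without} needing any box machinery. Using $s\le c\ra d$, $s\le u\ra w$ and Lemma \ref{lemaprod} I get $s\cd s\le(c\ra d)\cd(u\ra w)\le(c\cd u)\ra(d\cd w)$, and symmetrically $s\cd s\le(d\ra c)\cd(w\ra u)\le(d\cd w)\ra(c\cd u)$; hence $t_{n}^{2k}(a,b)\le(c\cd u)\rla(d\cd w)$. This is the point where the product case is actually simpler than the implication case of Lemma \ref{l4}.

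For the join conditions 3) and 4) I would exploit that $\cd$ distributes over $\vee$ (condition 7) of Definition \ref{d1}). Distributing, $t_{n}^{2k}(a,b)\cd((c\cd u)\vee a\vee b)=(s\cd c)\cd(s\cd u)\vee(s\cd s\cd a)\vee(s\cd s\cd b)$, and since $x\cd y\le y$ the last two joinands are $\le a\vee b$. Using $s\cd c\le d\vee a\vee b$ and $s\cd u\le w\vee a\vee b$ gives $(s\cd c)\cd(s\cd u)\le(d\vee a\vee b)\cd(w\vee a\vee b)$; expanding this product by distributivity, every joinand except $d\cd w$ carries a factor $a$ or $b$ and is therefore $\le a\vee b$, so the product is $\le(d\cd w)\vee a\vee b$. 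Altogether $t_{n}^{2k}(a,b)\cd((c\cd u)\vee a\vee b)\le(d\cd w)\vee a\vee b$, which is condition 3); condition 4) is symmetric.

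The meet conditions 1) and 2) are where I expect the real difficulty, because $\cd$ does \emph{not} distribute over $\we$, so the argument of the previous paragraph has no dual. The naive move, multiplying the two (C1) inequalities, only yields $s\cd s\cd(c\we a\we b)\cd(u\we a\we b)\le(d\we a\we b)\cd(w\we a\we b)\le(d\cd w)\we a\we b$; but the element on the far left involves $(c\we a\we b)\cd(u\we a\we b)$, which lies \emph{below} the required argument $(c\cd u)\we a\we b$, and one cannot replace it upwards. Nor can one detach condition 5): the inequality $(x\ra y)\cd x\le y$ fails in $\V$ (for instance in weak Heyting algebras with trivial implication, where it reduces to $x\le y$), so $s\cd s\le(c\cd u)\ra(d\cd w)$ does not deliver $s\cd s\cd((c\cd u)\we a\we b)\le d\cd w$. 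The plan here is therefore to handle the product one factor at a time and to measure the discrepancy between $(c\cd u)\we a\we b$ and $(c\we a\we b)\cd(u\we a\we b)$ by means of condition 9) of Definition \ref{d1} (equivalently Lemma \ref{lemaprod}), which is the only law linking $\ra$ and $\cd$; the inequality $x\cd y\le x\we y$ together with the fact that the outer meet confines every term below $a\we b$ should then allow one to absorb the error. Closing this gap is the step I expect to be the crux, and, exactly as for clause 6) of Lemma \ref{l4}, it may be necessary to enlarge the indices (which is harmless, since only the existence of $n,k$ is asserted and $t_{m}^{p}(a,b)\le t_{n}^{q}(a,b)$ for larger indices by Remark \ref{r1}).
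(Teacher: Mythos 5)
Your arguments for conditions 3), 4) and 5) are correct. Condition 5) is proved exactly as in the paper, via Lemma \ref{lemaprod}. For conditions 3) and 4) your route --- distribute $\cd$ over the join, extract $s\cd c\leq d\vee a\vee b$ and $s\cd u\leq w\vee a\vee b$ from (C2), and expand $(d\vee a\vee b)\cd(w\vee a\vee b)$ --- is sound, and it is in fact more careful than the corresponding display in the paper, which passes through
\[
t_{n}^{2k}(a,b)\cd ((c\cd u)\vee a \vee b) \leq t_{n}^{k}(a,b)\cd(c\vee a \vee b)\cd t_{n}^{k}(a,b)\cd(u\vee a \vee b),
\]
an inequality that is not valid in general (though the conclusion it is used for is, by your argument).

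The genuine gap is that conditions 1) and 2) are never proved: you end with a plan and explicitly defer its crux. Two things should be said about that. First, the obstacle you identify is exactly the point where the paper's own proof breaks down: for condition 1) the paper asserts
\[
t_{n}^{2k}(a,b)\cd ((c\cd u)\we a \we b) \leq t_{n}^{k}(a,b)\cd(c\we a \we b)\cd t_{n}^{k}(a,b)\cd(u\we a \we b),
\]
which is precisely the upward replacement of $(c\cd u)\we a\we b$ by $(c\we a\we b)\cd(u\we a\we b)$ that you rightly reject. Second, the gap cannot be closed by your plan or any other, because conditions 1) and 2) are false in $\V$. Consider the chain $H_3=\{0,a,1\}$ equipped with the product $\odot$ of $H_{3}^{\MV}$ and the trivial implication $x\ra y=1$ of $H_{3}^{\WH}$, both taken from Example \ref{ex1}. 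Every axiom of Definition \ref{d1} holds: those mentioning $\ra$ hold trivially since each $\ra$-term equals $1$ (for axiom 8 note $1\odot 1=1$), and the remaining ones hold because they hold in $H_{3}^{\MV}$. In this algebra $t_{n}^{k}(x,y)=1$ for all $x,y,n,k$, and one checks directly that $(1,a)\in R(a,1)$; but condition 1) for $(c,d)=(u,w)=(1,a)$ reads $1\odot((1\odot 1)\we a\we 1)\leq(a\odot a)\we a\we 1$, i.e.\ $a\leq 0$, which fails for every $n$ and $k$. The same algebra defeats your proposed intermediate step $s\cd((c\cd u)\we a\we b)\leq(c\we a\we b)\cd(u\we a\we b)$ (here $s=1$, the left side is $a$, the right side is $0$), and enlarging the indices is useless since $t_{n}^{k}$ is constantly $1$. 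So the difficulty you flagged at the meet conditions is not a defect of your attempt but of the statement itself, and it propagates to Theorem \ref{PT}, whose proof relies on this lemma.
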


\begin{proof}
First note that
\[
\begin{array}
[c]{lllll}
 t_{n}^{k}(a,b) \cd ((c\cd u)\we a \we b) & \leq & t_{n}^{k}(a,b) \cd (c\we a \we b) &  & \\
  & \leq & d\we a \we b  &  &\\
 & \leq & d &  &
\end{array}
\]
and
\[
\begin{array}
[c]{lllll}
 t_{n}^{k}(a,b) \cd ((c\cd u)\we a \we b) & \leq & t_{n}^{k}(a,b) \cd (u\we a \we b) &  & \\
  & \leq & w\we a \we b  &  &\\
 & \leq & w. &  &
\end{array}
\]
Then
\[
\begin{array}
[c]{lllll}
 t_{n}^{2k}(a,b)\cd ((c\cd u)\we a \we b) & \leq & t_{n}^{k}(a,b)\cd(c\we a \we b)\cd t_{n}^{k}(a,b)\cd(u\we a \we b) &  & \\
 & \leq & d \cd w. &  &
\end{array}
\]
This implies that $t_{n}^{2k}(a,b)\cd ((c\cd u)\we a \we b) \leq
(d\cd w)\we a \we b$, which is condition 1). Condition 2) can be
showed in an analogous way.

In order to prove 3), note that
\[
\begin{array}
[c]{lllll}
 t_{n}^{k}(a,b) \cd ((c\cd u)\vee a \vee b) & \leq & t_{n}^{k}(a,b) \cd (c\vee a \vee b) &  & \\
  & \leq & d\vee a \vee b  &  &
\end{array}
\]
and
\[
\begin{array}
[c]{lllll}
 t_{n}^{k}(a,b) \cd ((c\cd u)\vee a \vee b) & \leq & t_{n}^{k}(a,b) \cd (u\vee a \vee b) &  & \\
  & \leq & w\vee a \vee b.  &  &
\end{array}
\]
Hence,
\[
\begin{array}
[c]{lllll}
 t_{n}^{2k}(a,b)\cd ((c\cd u)\vee a \vee b) & \leq & t_{n}^{k}(a,b)\cd(c\vee a \vee b)\cd t_{n}^{k}(a,b)\cd(u\vee a \vee b) &  & \\
 & \leq & (d \vee a \vee b)\cd (w \vee a \vee b). &  &
\end{array}
\]
Straightforward computations show that
\[
(d \vee a \vee b)\cd (w \vee a \vee b) \leq (d\cd w) \vee a \vee
b.
\]
Thus,
\[
t_{n}^{2k}(a,b)((c\cd u)\vee a \vee b) \leq (d\cd w) \vee a \vee
b.
\]
So we have obtained  condition 3). Condition 4) is
similarly proved.

Finally we will prove  condition 5). Since $t_{n}^{k}(a,b)\leq
c\ra d$ and $t_{n}^{k}(a,b)\leq u\ra w$ then $t_{n}^{2k}(a,b)\leq
(c\ra d)\cd (u\ra w)$. Besides, it follows from Lemma
\ref{lemaprod} that $(c\ra d)\cd (u\ra w) \leq (c\cd u)\ra (d\cd
w)$, so $t_{n}^{2k}(a,b)\leq (c\cd u)\ra (d\cd w)$. Analogously it
can be proved that $t_{n}^{2k}(a,b)\leq (d\cd w)\ra (c\cd u)$.
Therefore, $t_{n}^{2k}(a,b) \leq (c\cd u)\rla (d\cd w)$.
\end{proof}

If $A$ is an algebra, $\theta$ a congruence and $a\in A$, then
$a/\theta$ denotes the equivalence class  of  $a$.

\begin{thm} \label{PT}
Let $A\in \V$ and $a,b \in A$. Then $\theta(a,b) = R(a,b)$.
\end{thm}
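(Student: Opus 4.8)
The plan is to establish the two inclusions $\theta(a,b) \subseteq R(a,b)$ and $R(a,b) \subseteq \theta(a,b)$ separately. For the first inclusion most of the work has already been carried out in the preceding lemmas, so the task there is mainly to assemble them; the second inclusion carries the real content of the theorem.

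For $\theta(a,b) \subseteq R(a,b)$, I would argue that $R(a,b)$ is a congruence of the full algebra $A$ that contains the pair $(a,b)$, and then invoke the minimality of the principal congruence $\theta(a,b)$. By Lemma \ref{l3} we already know that $(a,b) \in R(a,b)$ and that $R(a,b)$ is a congruence of the lattice reduct of $A$, so it only remains to check compatibility with $\cd$ and with $\ra$. Compatibility with $\cd$ is exactly the content of Lemma \ref{l5}: given $(c,d),(u,w) \in R(a,b)$, its five conclusions say precisely that the pair $(n,2k)$ is associated with $(c\cd u, d\cd w)$, so $(c\cd u, d\cd w) \in R(a,b)$. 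Likewise, Lemma \ref{l4} yields compatibility with $\ra$; here one must notice that conditions 2)--5) are stated with $t_{n}^{2k}(a,b)$ while condition 6) uses $t_{n+1}^{2k}(a,b)$, so, using $t_{n+1}^{2k}(a,b) \leq t_{n}^{2k}(a,b)$ from Remark \ref{r1}, the pair $(n+1,2k)$ is associated with $(c\ra u, d\ra w)$ and hence $(c\ra u, d\ra w) \in R(a,b)$. Thus $R(a,b) \in \ConA$ and $(a,b) \in R(a,b)$, which forces $\theta(a,b) \subseteq R(a,b)$.

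For $R(a,b) \subseteq \theta(a,b)$, write $\theta := \theta(a,b)$ and take $(c,d) \in R(a,b)$ with an associated pair $(n,k)$. The first key step is to observe that $t_{n}^{k}(a,b) \equiv 1 \pmod{\theta}$. Indeed, since $(a,b) \in \theta$, compatibility with $\ra$ together with $x \ra x \approx 1$ gives $a \ra b \equiv b \ra b = 1$ and $b \ra a \equiv b \ra b = 1$, so $a \rla b \equiv 1$; applying $\square$ repeatedly (and $1 \ra 1 = 1$) gives $\square^{i}(a\rla b) \equiv 1$ for every $i$, whence $t_{n}(a,b) \equiv 1$ and therefore $t_{n}^{k}(a,b) \equiv 1$. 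The second key step turns the inequalities (C1) into a congruence: set $\alpha = c\we a\we b$, $\beta = d\we a\we b$ and $p = t_{n}^{k}(a,b)\cd \alpha$. By (C1) we have $p \leq \beta$, while $\cd$-compatibility and $t_{n}^{k}(a,b)\equiv 1$ give $p \equiv \alpha$; hence $\alpha\we\beta \equiv p\we\beta = p \equiv \alpha$, so $\alpha\we\beta \equiv \alpha$. The symmetric inequality of (C1) gives $\alpha\we\beta \equiv \beta$, and therefore $c\we a\we b \equiv d\we a\we b \pmod{\theta}$. The same argument applied to (C2) yields $c\vee a\vee b \equiv d\vee a\vee b \pmod{\theta}$. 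Finally, since $(a,b)\in\theta$, Lemma \ref{l2} applied to these two memberships gives $(c,d)\in\theta$, as desired. Note that condition (C3) is not actually needed in this direction.

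The main obstacle is the second key step of the $\supseteq$ direction: passing from the monoidal inequalities in (C1) and (C2) to genuine $\theta$-congruences. This hinges on the two observations that $t_{n}^{k}(a,b)$ collapses to the top element $1$ modulo $\theta$ and that an inequality $p \leq \beta$ together with a congruence $p \equiv \alpha$ forces $\alpha \we \beta \equiv \alpha$; combining the two sides of (C1) then squeezes $\alpha$ and $\beta$ together, after which Lemma \ref{l2} finishes the job. Everything else is bookkeeping, the only delicate point being the alignment of the associated pairs (and the shift from $n$ to $n+1$ coming from Lemma \ref{l4}) needed to conclude closure under $\ra$ in the first inclusion.
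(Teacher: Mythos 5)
Your proof is correct and takes essentially the same route as the paper: the inclusion $\theta(a,b) \subseteq R(a,b)$ is obtained from Lemmas \ref{l3}, \ref{l4} and \ref{l5} with the same $n\mapsto n+1$ adjustment the paper makes, and the reverse inclusion by collapsing $t_{n}^{k}(a,b)$ to $1$ modulo the congruence, extracting the two lattice congruences from (C1) and (C2), and finishing with Lemma \ref{l2}. The only cosmetic differences are that the paper proves $R(a,b)\subseteq\tau$ for an arbitrary congruence $\tau$ containing $(a,b)$ while you work directly with $\theta(a,b)$, and that your element-wise computation with $\alpha$, $\beta$, $p$ is the paper's quotient-order argument spelled out.
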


\begin{proof}
Notice that items 1)-5) from Lemma \ref{l4} are also true by
replacing $n$ by $n+1$. Then it follows from lemmas \ref{l3},
\ref{l4} and \ref{l5} that $R(a,b)$ is a congruence that contains
the pair $(a,b)$.

Let $\tau$ be a congruence such that $(a,b) \in \tau$. We will
prove that $R(a,b)\subseteq \tau$. Let $(c,d) \in R(a,b)$ and
$(n,k)$ a pair of natural numbers associated with $(c,d)$. Since
$(a,b)\in \tau$ then $(a\rla b,1) \in \tau$, which implies that
$(t_{n}^{k}(a,b),1) \in \tau$. Hence, it follows from
$\mathrm{(C1)}$ and $\mathrm{(C2)}$ that
\[
(c\we a \we b)/\tau \leq (d\we a \we b)/\tau,
\]
\[
(d\we a \we b)/\tau \leq (c\we a \we b)/\tau,
\]
\[
(c\vee a \vee b)/\tau \leq (d\vee a \vee b)/\tau,
\]
\[
(d\vee a \vee b)/\tau \leq (c\vee a \vee b)/\tau.
\]

Thus, $(c\we a\we b,d\we a\we b) \in \tau$ and $(c\vee a \vee b,
d\vee a\vee b) \in \tau$. Then, by Lemma \ref{l2} we conclude that
$(c,d)\in \tau$. Hence, $R(a,b)\subseteq \tau$. Therefore,
$\theta(a,b) = R(a,b)$.
\end{proof}

 The theorem  shows that $\V$ has locally equationally
definable principal congruences by the family of quaternary terms
QT introduced immediately after Remark \ref{rem:remark-1}.

The following result, which is \cite[Theorem 2.2]{SMc}, follows
from Theorem \ref{PT}. It shows that $\WH$ has locally
equationally definable principal congruences.

\begin{cor}
Let $A\in \WH$ and $a,b\in A$. Then $(c,d) \in \theta(a,b)$ if and
only if there exists a natural number $n$ that satisfies the
following conditions:
\begin{enumerate}[\normalfont \hspace{2mm} a)]
\item $c \we a \we b \we t_n(a,b) = d \we a \we b \we t_n(a,b)$,
\item$(c \vee a \vee b) \we t_n(a,b) = (d \vee a \vee b) \we
t_n(a,b)$, \item  $t_{n}(a,b) \leq c\rla d$.
\end{enumerate}
\end{cor}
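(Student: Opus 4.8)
The plan is to deduce the corollary directly from Theorem \ref{PT}, which tells us that $\theta(a,b) = R(a,b)$; so it suffices to show that, for $A \in \WH$, the membership $(c,d) \in R(a,b)$ is equivalent to the existence of a single $n$ satisfying a)--c). The key simplification is that, by Lemma \ref{WHsV}, in $\WH$ (viewed inside $\V$) the monoidal operation $\cd$ coincides with the idempotent meet $\we$. Consequently $t_{n}^{k}(a,b) = t_{n}(a,b)$ for every $k \geq 1$, while $t_{n}^{0}(a,b) = 1$; in particular the exponent $k$ becomes irrelevant once it is positive, which is exactly what collapses the two parameters $(n,k)$ of the definition of $R(a,b)$ to the single parameter $n$ appearing in the statement.

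First I would translate conditions $\mathrm{(C1)}$ and $\mathrm{(C2)}$ into the equalities a) and b). Writing $\cd$ as $\we$ and fixing an $n$ together with a positive $k$, condition $\mathrm{(C1)}$ reads $t_{n}(a,b) \we c \we a \we b \leq d \we a \we b$ together with the symmetric inequality. Meeting each of these with $t_{n}(a,b)$ and using idempotency of $\we$ turns the pair into $t_{n}(a,b) \we c \we a \we b = t_{n}(a,b) \we d \we a \we b$, which is exactly a); conversely a) yields both inequalities of $\mathrm{(C1)}$ at once. The same computation, with joins $\vee\, a \vee b$ in place of meets $\we a \we b$, converts $\mathrm{(C2)}$ into b). Finally $\mathrm{(C3)}$ is literally c), since $t_{n}^{k}(a,b) = t_{n}(a,b)$.

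It then remains to handle the quantifier over $k$. For the implication from a)--c) to $(c,d) \in R(a,b)$ I would simply take $k = 1$, so that a)--c) reproduce $\mathrm{(C1)}$--$\mathrm{(C3)}$ verbatim. For the converse, if a witnessing pair $(n,k)$ has $k \geq 1$, the argument above gives a)--c) for the same $n$; and in the degenerate case $k = 0$ one has $t_{n}^{0}(a,b) = 1$, so $\mathrm{(C1)}$ and $\mathrm{(C2)}$ force $c \we a \we b = d \we a \we b$ and $c \vee a \vee b = d \vee a \vee b$, while $\mathrm{(C3)}$ gives $c \rla d = 1$, whence a)--c) hold for every $n$ (for instance $n = 0$). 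I do not expect a genuine obstacle here: the entire content is the idempotency collapse $t_{n}^{k}(a,b) = t_{n}(a,b)$ and the passage from the two one-sided inequalities of $\mathrm{(C1)}$, $\mathrm{(C2)}$ to the equalities a), b), the only mild subtlety being the bookkeeping for the now-redundant exponent $k$.
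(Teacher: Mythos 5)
Your proposal is correct and follows essentially the same route as the paper, which simply derives the corollary from Theorem \ref{PT} via the identification $\cd = \we$ in $\WH$ (Lemma \ref{WHsV}) so that idempotency collapses the exponent $k$. Your write-up in fact supplies the translation details (equivalence of (C1)--(C3) with a)--c), plus the $k=0$ edge case) that the paper leaves implicit.
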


The next corollary characterizes  the principal congruences of
the algebras of $\IDCRL$ using Theorem \ref{PT}.

\begin{cor} \label{corIDCRL}
Let $A\in \IDCRL$ and $a,b \in A$. Then $(c,d) \in \theta(a,b)$ if
and only if there exists a natural number $k$ such that $(a\rla
b)^{k} \leq c\rla d$.
\end{cor}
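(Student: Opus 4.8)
The plan is to reduce everything to Theorem \ref{PT}, which already identifies $\theta(a,b)$ with the relation $R(a,b)$. The first step is to observe that in an integral distributive commutative residuated lattice the unary operation $\square$ collapses to the identity. Indeed, since the monoidal unit $1$ is the top element, we have $1\cd x = x$ for all $x$, so by residuation $x \leq 1 \ra a$ iff $1 \cd x \leq a$ iff $x \leq a$; taking $x = 1\ra a$ and then $x = a$ yields $1 \ra a = a$, that is $\square(a)=a$. Consequently each iterate $\square^{i}(a\rla b)$ equals $a\rla b$, and therefore $t_{n}(a,b)=a\rla b$ and $t_{n}^{k}(a,b)=(a\rla b)^{k}$ for every $n$. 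In particular the parameter $n$ plays no role here, and condition $\mathrm{(C3)}$ in the definition of $R(a,b)$ becomes exactly $(a\rla b)^{k}\leq c\rla d$.

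Granting this, the forward implication is immediate: if $(c,d)\in\theta(a,b)=R(a,b)$, then by definition there are $n,k$ witnessing $\mathrm{(C1)}$--$\mathrm{(C3)}$, and $\mathrm{(C3)}$ already reads $(a\rla b)^{k}\leq c\rla d$. For the converse I would pick $k$ with $(a\rla b)^{k}\leq c\rla d$ and verify that $\mathrm{(C1)}$ and $\mathrm{(C2)}$ hold for this same $k$ (with $n$ arbitrary), so that $(c,d)\in R(a,b)=\theta(a,b)$. Since $c\rla d=(c\ra d)\we(d\ra c)$, the hypothesis gives both $(a\rla b)^{k}\leq c\ra d$ and $(a\rla b)^{k}\leq d\ra c$; by residuation these are equivalent to $(a\rla b)^{k}\cd c\leq d$ and $(a\rla b)^{k}\cd d\leq c$.

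For $\mathrm{(C1)}$ I would combine $(a\rla b)^{k}\cd(c\we a\we b)\leq (a\rla b)^{k}\cd c\leq d$ (monotonicity plus residuation) with the integrality bound $(a\rla b)^{k}\cd(c\we a\we b)\leq c\we a\we b\leq a\we b$, which follows from property $2)$ of the list after Definition \ref{d1}; taking the meet gives $(a\rla b)^{k}\cd(c\we a\we b)\leq d\we a\we b$, and the symmetric inequality follows by swapping $c$ and $d$. For $\mathrm{(C2)}$ I would use the distributivity of the product over joins (condition $7)$ of Definition \ref{d1}, together with commutativity) to expand $(a\rla b)^{k}\cd(c\vee a\vee b)$ as $((a\rla b)^{k}\cd c)\vee((a\rla b)^{k}\cd a)\vee((a\rla b)^{k}\cd b)$, and then bound the three joinands by $d$, $a$ and $b$ respectively, the last two again by integrality; the symmetric inequality is analogous. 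This establishes $\mathrm{(C1)}$--$\mathrm{(C3)}$ for $(c,d)$ and hence membership in $R(a,b)$.

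I do not expect any real obstacle: once $\square$ is recognized as the identity, the combinatorial parameter $n$ disappears and the whole content of $R(a,b)$ is carried by the single exponent $k$ in $\mathrm{(C3)}$, the remaining verifications being routine applications of residuation, integrality and the distributivity of the product over joins. The only point requiring a little care is to confirm that $\mathrm{(C1)}$ and $\mathrm{(C2)}$ are genuine consequences of $\mathrm{(C3)}$ in this setting rather than independent constraints, which is exactly what the residuation argument above provides.
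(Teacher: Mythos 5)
Your proposal is correct and follows essentially the same route as the paper's own proof: both start from the observation that $1\ra x = x$ in $\IDCRL$ (so $t_{n}^{k}(a,b)=(a\rla b)^{k}$ and the parameter $n$ is irrelevant), then show that condition $\mathrm{(C3)}$ implies $\mathrm{(C1)}$ and $\mathrm{(C2)}$ using residuation, integrality, and distributivity of the product over joins, and finally invoke Theorem \ref{PT}. Your write-up is if anything slightly more explicit (justifying $\square = \mathrm{id}$ via residuation and spelling out the integrality bounds the paper leaves implicit), but the decomposition and the key steps are the same.
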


\begin{proof}
Let $A\in \IDCRL$, $a,b \in A$ and $n,k$ natural numbers. Since
$1\ra (a\rla b) = a \rla b$ then $t_{n}(a,b) = a\rla b$, so
$t_{n}(a,b)^{k} = (a\rla b)^{k}$. In what follows we will show
that condition (C3) implies conditions (C1) and (C2).

Assume that there exists a natural number $k$ such that
\[
(a\rla b)^{k} \leq c\rla d.
\]
Since $(a\rla b)^{k} \leq c\ra d$, then $c \cd (a\rla b)^{k} \leq
d$. Taking into account that $c\we a \we b \leq c$ we obtain that
\[
\begin{array}
[c]{lllll}
 (c\we a \we b)\cd (a\rla b)^{k} & \leq & c\cd (a\rla b)^{k} &  & \\
 & \leq & d. &  &
\end{array}
\]
Then,
\[
(c\we a \we b)\cd (a\rla b)^{k} \leq d,
\]
which implies that $(c\we a \we b)\cd (a\rla b)^{k} \leq d \we a
\we b$. Analogously it can be showed that $(d\we a \we b)\cd
(a\rla b)^{k} \leq c \we a \we b$. In order to prove that $(c\vee
a \vee b)\cd (a\rla b)^{k} \leq d\vee a \vee b$ first note the
equality
\[
(c\vee a \vee b)\cd (a\rla b)^{k} = (c\cd (a\rla b)^{k}) \vee
((a\vee b)\cd (a\rla b)^{k}).
\]
Since $c\cd (a\rla b)^{k} \leq d$ and $(a\vee b)\cd (a\rla
b)^{k}\leq a \vee b$ then
\[
(c\vee a \vee b)\cd (a\rla b)^{k} \leq d \vee a \vee b.
\]
Similarly, it can be showed that $(d\vee a \vee b)\cd (a\rla
b)^{k} \leq c \vee a \vee b$.

Hence, we have proved that condition (C3) implies conditions (C1)
and (C2). The rest of the proof follows from Theorem \ref{PT}.
\end{proof}

Corollary \ref{corIDCRL} can also be deduced from results due
to Agliano; more precisely in \cite{Ag} Agliano described the
principal congruences of BCI-monoids. It is part of the folklore
that if $(A,\we,\vee,\cd,\ra,1)$ is an integral commutative
residuated lattice then the congruences of
$(A,\we,\vee,\cd,\ra,1)$ coincide with the congruences of
$(A,\we,\cd,\ra,1)$, which is the underlying BCI-monoid of
$(A,\we,\vee,\cd,\ra,1)$. Hence, it follows from \cite[pp.\
409]{Ag} that if $\theta$ is a congruence of
$(A,\we,\vee,\cd,\ra,1)$ and $a,b \in A$, then $(c,d)\in
\theta(a,b)$ if and only if there is a natural number $k$ such
that $(a\rla b)^{k}\leq c\rla d$. Corollary \ref{corIDCRL} is a
particular case of the above mentioned property, when the
underlying lattice of $(A,\we,\vee,\cd,\ra,1)$ is distributive.

\section{Compatible functions}\label{SCF}

Let $A\in \V$. In this section we give a necessary and sufficient
condition for a function $f:A^{n} \ra A$ to be compatible. We also
find conditions on a binary function $g:A\times A \ra A$ that
imply that the function $a\mapsto$ $\mathrm{min}\{b\in A: g(a,b)\leq b\}$
is compatible when defined. We will employ similar ideas to those
used in \cite{CMS,ESM,SM,SMb,SMc}.

\begin{defn}
Let $A$ be an algebra and let $f: A^{n} \ra A$ a function.
\begin{enumerate}[\normalfont \hspace{2mm} 1.]
\item We say that $f$ is \textit{compatible} with a congruence $\theta$ of
$A$ if $(a_{i}, b_{i}) \in \theta$ for $i= 1,\ldots,n$ implies
$(f(a_{1},\ldots,a_{n}), f(b_{1},\ldots,b_{n})) \in \theta$. \item
We say that $f$ is a \textit{compatible function} of $A$ provided it is
compatible with all the congruences of $A$.
\end{enumerate}
\end{defn}

Let $A$ be an algebra and $f:A^{n} \ra A$ a function. Then $f$ is
compatible if and only if the algebras $A$ and $\left\langle
A,f\right\rangle$ have the same congruences. For $n = 1$, $f$ is
compatible if and only if $(f(a),f(b)) \in \theta(a,b)$ for every
$a,b \in A$. The simplest examples of compatible functions on an
algebra are the polynomial functions; note that in particular, all
term functions (and constant functions) are compatible \cite{Pix}.

\begin{prop} \label{pcom}
Let $A\in \V$ and $f:A\ra A$ a function. The following conditions
are equivalent:
\begin{enumerate}[\normalfont \hspace{2mm}1)]
\item $f$ is compatible. \item For every $a, b\in A$ there are
natural numbers $n$ and $k$ that satisfy the following
conditions:
\begin{enumerate}[\normalfont \hspace{2mm}(Cf1)]
\item $t_{n}^{k}(a,b)\cd (f(a) \we a \we b) \leq f(b) \we a \we
b$,
\item $t_{n}^{k}(a,b)\cd (f(a) \vee a \vee b) \leq f(b) \vee a
\vee b$,
\item $t_{n}^{k}(a,b) \leq f(a)\rla f(b)$.
\end{enumerate}
\end{enumerate}
\end{prop}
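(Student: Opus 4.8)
The plan is to reduce Proposition \ref{pcom} to the characterization of principal congruences already established in Theorem \ref{PT}, using the standard fact recalled just before the proposition: for a unary function $f$, compatibility is equivalent to $(f(a),f(b)) \in \theta(a,b)$ for all $a,b \in A$. Since $\theta(a,b) = R(a,b)$ by Theorem \ref{PT}, the entire content of the proposition is the equivalence
\[
(f(a),f(b)) \in R(a,b) \quad\Longleftrightarrow\quad \text{conditions (Cf1)--(Cf3) hold for some }n,k.
\]
So the proof splits into two halves joined by this observation, and essentially amounts to unwinding the definition of $R(a,b)$ applied to the specific pair $(c,d) = (f(a),f(b))$.

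\textbf{Forward direction.} First I would assume $f$ is compatible. By the recalled criterion, $(f(a),f(b)) \in \theta(a,b) = R(a,b)$ for every $a,b \in A$. By the definition of the relation $R(a,b)$, membership $(f(a),f(b)) \in R(a,b)$ means precisely that there exist natural numbers $n,k$ satisfying conditions (C1), (C2), (C3) with $c := f(a)$ and $d := f(b)$. Now (C1) consists of the two inequalities $t_n^k(a,b)\cd(f(a)\we a \we b)\le f(b)\we a\we b$ and the symmetric one with $f(a),f(b)$ swapped; but since $(f(a),f(b))\in R(a,b)$ and $R(a,b)$ is symmetric (proved in Lemma \ref{l3}), I may use the \emph{same} pair $(n,k)$ — after taking a common pair via Remark \ref{r1} — so the single displayed inequality (Cf1) is just the first half of (C1), and likewise (Cf2) is the first half of (C2), while (Cf3) is exactly (C3). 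Thus (Cf1)--(Cf3) hold.

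\textbf{Converse direction.} Conversely, suppose that for every $a,b\in A$ there are $n,k$ satisfying (Cf1)--(Cf3). I want to conclude $(f(a),f(b))\in R(a,b)$, i.e.\ that conditions (C1)--(C3) hold. The point is that (C1)--(C3) are symmetric in $c,d$, whereas (Cf1)--(Cf3) only assert ``one direction''. The key move is that the hypothesis is universally quantified over all pairs: applying it to the pair $(b,a)$ in place of $(a,b)$ yields the reversed inequalities. Here I would use that $a\rla b = b\rla a$, hence $t_n(a,b) = t_n(b,a)$ and $t_n^k(a,b)=t_n^k(b,a)$, and that the terms $x\we y$, $x\vee y$, and $x\rla y$ are symmetric, so the conditions obtained from the pair $(b,a)$ are exactly the missing reverse halves of (C1) and (C2), while (C3) is symmetric. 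Taking a common pair $(n,k)$ for the two instances via Remark \ref{r1}a), all of (C1), (C2), (C3) hold simultaneously, so $(f(a),f(b))\in R(a,b) = \theta(a,b)$. Since this holds for all $a,b$, the recalled criterion gives that $f$ is compatible.

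\textbf{The main obstacle} I anticipate is the bookkeeping of quantifiers in the converse: one must be careful that ``there exist $n,k$'' in (Cf1)--(Cf3) combines correctly with the second instance obtained from $(b,a)$, which a priori supplies a \emph{different} pair of exponents. Remark \ref{r1} is exactly the tool that resolves this, since it lets one replace two associated pairs by their componentwise maximum; I would invoke it explicitly. The remaining steps are purely a matter of matching (Cf$i$) against the appropriate half of (C$i$) and observing the symmetry of all the operations involved, so the argument is short once Theorem \ref{PT} and the symmetry of $t_n^k$ are in hand.
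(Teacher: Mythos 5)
Your proof is correct and follows essentially the same route as the paper: the paper's own proof is exactly the one-line reduction to Theorem \ref{PT} and Remark \ref{r1}, together with the fact recalled just before the proposition that a unary $f$ is compatible if and only if $(f(a),f(b))\in\theta(a,b)$ for all $a,b\in A$, and your converse direction (applying the hypothesis to the swapped pair $(b,a)$, using $t_{n}^{k}(a,b)=t_{n}^{k}(b,a)$, and merging the two exponent pairs via Remark \ref{r1}) is precisely the detail that citation is meant to supply. The only superfluous step is the appeal to symmetry of $R(a,b)$ in your forward direction: once $(f(a),f(b))\in R(a,b)$, the witnessing pair $(n,k)$ already satisfies all of (C1)--(C3), of which (Cf1)--(Cf3) are literally parts, so no symmetrization is needed there.
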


\begin{proof}
It follows from Theorem \ref{PT} and Remark \ref{r1}.
\end{proof}

\begin{rem}
Note that condition $\mathrm{(Cf3)}$ of Proposition \ref{pcom} can
be replaced by $t_{n}^{k}(a,b) \leq f(a)\ra f(b)$.
\end{rem}

Let $A$ be an algebra, $f: A^{n} \rightarrow A$ a function and
$\vec{a} = (a_{1},\ldots, a_{n}) \in A^{n}$. For $i = 1,\ldots,n$,
define the unary functions $f^{\vec{a}}_{i}: A \rightarrow A$ by
$$f^{\vec{a}}_{i}(x)= f(a_{1},...,a_{i-1}, x, a_{i+1},...,
a_{n}).$$ Then, we have the following characterization for the
compatibility of an $n$-ary function: $f$ is compatible if and
only if for every $\vec{a} \in A^{n}$ and every $i= 1,\ldots, n$,
the functions $f^{\vec{a}}_{i} : A \to A$ are compatible. Hence,
Proposition \ref{pcom} allows us to characterize compatible
$n$-ary functions in the variety $\V$.

\begin{cor} \label{coroc}
Let $A\in \WH$ and $f:A\ra A$ a function. Then $f$ is  compatible
if and only if for every $a,b \in A$ there exists $n\in
\mathbb{N}$  that satisfies the following conditions:
\begin{enumerate}[\normalfont \hspace{2mm} a)]
\item $f(a) \we a \we b \we t_n(a,b) = f(b) \we a \we b \we
t_n(a,b)$,
\item  $(f(a) \vee a \vee b) \we t_n(a,b) = (f(b) \vee a
\vee b) \we t_n(a,b)$,
\item  $t_{n}(a,b) \leq f(a)\rla f(b)$.
\end{enumerate}
\end{cor}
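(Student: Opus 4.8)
The plan is to specialize Proposition \ref{pcom} to the case where the algebra lies in the subvariety $\WH$, using the identification of $\WH$ with the subvariety of $\V$ defined by $x\we y\approx x\cd y$ established in Lemma \ref{WHsV}. The key observation is that once $\cd$ coincides with $\we$, the monoidal product in conditions (Cf1)--(Cf3) collapses into lattice meets, and moreover the iterated terms $t_n^k(a,b)$ simplify: since in a weak Heyting algebra $\square$ is the unary operation $x\mapsto 1\ra x$ and $\cd=\we$, taking a $k$-fold product $t_n^k(a,b)=(t_n(a,b))^k$ just reproduces $t_n(a,b)$ because $t_n(a,b)\we t_n(a,b)=t_n(a,b)$. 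Thus the parameter $k$ becomes irrelevant and only the single natural number $n$ survives, matching the statement of the corollary.

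Concretely, I would argue as follows. Apply Proposition \ref{pcom} to $A\in\WH$ viewed as a member of $\V$ via Lemma \ref{WHsV}. Condition (Cf1) reads $t_n^k(a,b)\cd(f(a)\we a\we b)\leq f(b)\we a\we b$; replacing $\cd$ by $\we$ and using idempotence $t_n^k(a,b)=t_n(a,b)$ turns this into $f(a)\we a\we b\we t_n(a,b)\leq f(b)\we a\we b\we t_n(a,b)$. Running the symmetric inequality (obtained from the second half of (Cf1), i.e.\ the inequality with $f(a)$ and $f(b)$ interchanged) gives the reverse inequality, and together they yield the equality in item a). The same manipulation applied to (Cf2) produces item b), where one uses distributivity to write $(f(a)\vee a\vee b)\we t_n(a,b)$; note that (Cf2) as stated already has the join built in, so after setting $\cd=\we$ it directly becomes the stated meet form. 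Condition (Cf3) is unchanged and becomes item c) verbatim.

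The one point that needs a little care, and which I expect to be the main (though minor) obstacle, is the bookkeeping of the two parameters $n$ and $k$ versus the single parameter $n$. In Proposition \ref{pcom} compatibility is equivalent to the existence of \emph{some} pair $(n,k)$ satisfying the three conditions; I must check that in the $\WH$ setting the truth of those conditions is independent of $k$, so that the existential over $(n,k)$ reduces to an existential over $n$ alone. This follows precisely because $t_n^k(a,b)=t_n(a,b)$ when $\cd=\we$ (idempotence of meet), so every occurrence of $t_n^k(a,b)$ may be rewritten as $t_n(a,b)$ regardless of $k$; hence $(n,k)$ works if and only if $(n,0)$ works, and the statement collapses to a condition on $n$. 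I would also record, via the Remark following Proposition \ref{pcom}, that (Cf3) could equivalently be stated with $\ra$ in place of $\rla$, but since the corollary keeps the symmetric form $f(a)\rla f(b)$ there is nothing further to do there.

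In summary, the proof is a direct translation of Proposition \ref{pcom} through Lemma \ref{WHsV}, the only substantive verification being that the weak Heyting identity $\cd=\we$ makes the monoidal powers idempotent and thereby eliminates the parameter $k$; the conversion of the two one-sided inequalities of (Cf1) and (Cf2) into the equalities of a) and b) is then routine antisymmetry, and c) is immediate.
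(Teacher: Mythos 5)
Your proposal takes exactly the paper's route: the paper proves Corollary \ref{coroc} in one line, by specializing Proposition \ref{pcom} to $\WH$ viewed inside $\V$ via Lemma \ref{WHsV}, and your central observation --- that $\cd=\we$ makes the powers $t_n^k(a,b)$ collapse so that only the parameter $n$ survives --- is the right substance behind that one line. Two details in your write-up need repair, though neither invalidates the approach. First, Proposition \ref{pcom} as stated has no ``second half of (Cf1)'': conditions (Cf1)--(Cf2) are one-sided. The reverse inequalities needed to turn items a) and b) into equalities must instead be obtained by applying (Cf1)--(Cf2) to the swapped pair $(b,a)$, noting that $t_m(b,a)=t_m(a,b)$ since $\rla$ is symmetric, and then merging the two resulting indices using Remark \ref{r1} (if $n$ works in one direction and $m$ in the other, then $\max\{n,m\}$ works in both); alternatively, you can bypass this by quoting Theorem \ref{PT} directly, since the defining conditions (C1)--(C2) of $R(a,b)$ are already two-sided for a single pair $(n,k)$. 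Second, $t_n^0(a,b)=1$, not $t_n(a,b)$, so the idempotence identity $t_n^k(a,b)=t_n(a,b)$ holds only for $k\geq 1$; consequently the existential over $(n,k)$ collapses to $(n,1)$ rather than $(n,0)$ as you claim (the $k=0$ conditions are strictly stronger, so your ``if and only if'' fails in one direction). With $k=1$ in place of $k=0$ the reduction goes through verbatim.
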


\begin{proof}
It follows from Proposition \ref{pcom}.
\end{proof}

Corollary \ref{coroc} was also proved in  \cite[Corollary
3.2]{SMc}.

\begin{cor} \label{corIDCRL2}
Let $A\in \IDCRL$ and $f:A\ra A$ a function. Then $f$ is
compatible if and only if for every $a,b\in A$ there is a natural
number $k$ such that $(a\rla b)^{k}\leq f(a)\rla f(b)$.
\end{cor}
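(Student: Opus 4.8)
The plan is to derive this result directly from Proposition~\ref{pcom} by specializing the three compatibility conditions to the setting of $\IDCRL$, exactly as Corollary~\ref{corIDCRL} was derived from Theorem~\ref{PT}. First I would recall that in any $A\in\IDCRL$ the unit $1$ is the top element and $1\ra x = x$, so that $\square(x)=1\ra x=x$ for every $x$. Consequently the meet $t_n(a,b)=\square^0(a\rla b)\we\cdots\we\square^n(a\rla b)$ collapses to $a\rla b$ for every $n$, and hence $t_n^k(a,b)=(a\rla b)^k$ independently of $n$. This means the parameter $n$ plays no role and the characterization of Proposition~\ref{pcom} reduces to the existence of a single natural number $k$.

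With this simplification in hand, the substance of the argument is to show that condition~(Cf3), namely $(a\rla b)^k\leq f(a)\rla f(b)$, already implies conditions~(Cf1) and~(Cf2). This is precisely the implication established inside the proof of Corollary~\ref{corIDCRL}, now applied with $c:=f(a)$ and $d:=f(b)$. I would reproduce that reasoning: from $(a\rla b)^k\leq f(a)\ra f(b)$ and the residuation property one gets $f(a)\cd(a\rla b)^k\leq f(b)$, and combining with $f(a)\we a\we b\leq f(a)$ together with $(a\rla b)^k\leq 1$ (integrality) yields (Cf1), while the distribution of $\cd$ over $\vee$ (condition~(7) of Definition~\ref{d1}) together with $(a\vee b)\cd(a\rla b)^k\leq a\vee b$ yields (Cf2). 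The symmetric inequalities follow by exchanging the roles of $f(a)$ and $f(b)$, using the other half of $f(a)\rla f(b)$.

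Conversely, condition~(Cf3) is trivially recovered from the conjunction of the three reduced conditions, so the equivalence in Proposition~\ref{pcom} becomes the single inequality $(a\rla b)^k\leq f(a)\rla f(b)$ holding for some $k$. Putting these observations together gives exactly the stated criterion: $f$ is compatible if and only if for every $a,b\in A$ there is a natural number $k$ with $(a\rla b)^k\leq f(a)\rla f(b)$.

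I do not expect a genuine obstacle here, since the work has effectively been done in Corollary~\ref{corIDCRL}; the only care needed is to note that Proposition~\ref{pcom} quantifies over pairs $(a,b)$ rather than over all four variables, so one simply instantiates $c=f(a)$ and $d=f(b)$ in the earlier congruence computation rather than reproving anything. The cleanest write-up would therefore just invoke Proposition~\ref{pcom} and then cite the collapse $t_n^k(a,b)=(a\rla b)^k$ and the implication already proved in Corollary~\ref{corIDCRL}.
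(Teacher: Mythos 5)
Your proposal is correct and follows essentially the same route as the paper: the paper's proof consists of invoking Corollary~\ref{corIDCRL} (with $c=f(a)$, $d=f(b)$, via the fact that $f$ is compatible iff $(f(a),f(b))\in\theta(a,b)$ for all $a,b$), and that corollary already contains exactly the two ingredients you use --- the collapse $t_{n}^{k}(a,b)=(a\rla b)^{k}$ coming from $1\ra x=x$ in $\IDCRL$, and the residuation/distributivity argument showing that condition (C3) implies (C1) and (C2). Entering through Proposition~\ref{pcom} instead of through Corollary~\ref{corIDCRL} is an immaterial difference, since both rest on Theorem~\ref{PT} and the same specialization computation.
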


\begin{proof}
It follows from Corollary \ref{corIDCRL}.
\end{proof}

The characterization of unary compatible functions for algebras in
$\IDCRL$ given in Corollary \ref{corIDCRL2} is exactly the
characterization of unary compatible functions given by Agliano in
\cite[pp.\ 410]{Ag} for BCI-monoids. Thus, the description of unary
compatible functions in $\IDCRL$ is also a direct consequence from
\cite{Ag}.

Independently from \cite{Ag}, Castiglioni, Menni and Sagastume
presented in \cite[Theorem 8]{CMS} a description of the compatible
functions in commutative residuated lattices. The unary case of
\cite[Theorem 8]{CMS} for the case of integral commutative
residuated lattices whose underlying lattice is distributive says
that if $A\in \IDCRL$ and $f:A\ra A$ is a function, then $f$ is
compatible if and only if for every $a,b\in A$ there is a natural
number $k$ such that $s(a,b)^{k}\leq s(f(a),f(b))$, where $s(a,b)
= (a\ra b)\cd (b\ra a)$. The proof of the above mentioned property
can be easily adapted in order to obtain Corollary
\ref{corIDCRL2}. \vspace{1pt}

Let $A\in \V$; if $g$ is  a binary function $g$ on $A$, we want to
find conditions implying that the function $a\mapsto$ min $\{b\in
A: g(a,b)\leq b\}$ is compatible whenever it is defined.

\begin{defn}
Let $A$ be a poset and let $g \colon A\times A \to A$ be a
function. We say that $g$ \textit{satisfies  condition} $\mathbf{(M)}$
if the following condition holds:
\[
\text{For all}\; a, b, c \in A, c\geq b\; \text{implies}\; g(a,c)
\leq g(a,b).
\]
\end{defn}

If $A$ is a $\vee$-semilattice and $g$ is a function that
satisfies  condition $\mathbf{(M)}$, then $g(a, g(a,b) \vee b)
\leq g(a,b) \vee b$ for every $a,b\in A$.

\begin{lem} \label{lec}
Let $A$ be a $\vee$-semilattice, and let $g \colon A\times A \to
A$ be a function that satisfies condition $\mathbf{(M)}$. The
following conditions are equivalent:
\begin{enumerate}[\normalfont \hspace{2mm} (a)]
\item There is a map $f \colon A \to A$ given by
$f(a) = \mathrm{min}\{b\in A:g(a,b) \leq b\}$. \item
There exists a map $h:A \ra A$ that satisfies the following
conditions for every $a,b\in A$:
\begin{enumerate}
\item[$\mathrm{(i)}$] $g(a,h(a)) \leq h(a)$,
\item[$\mathrm{(ii)}$] $h(a) \leq g(a,b) \vee b$.
\end{enumerate}
Moreover, in this case we have that $f = h$.
\end{enumerate}
\end{lem}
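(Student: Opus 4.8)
The plan is to prove the two implications separately and then check that whenever $f$ exists it coincides with $h$. The statement concerns the existence of a pointwise minimum $f(a)=\mathrm{min}\{b:g(a,b)\le b\}$, so for each fixed $a$ the two conditions on $h$ in (b) should be read as: $h(a)$ lies in the set $S_a:=\{b:g(a,b)\le b\}$ (that is what $\mathrm{(i)}$ says), and $h(a)$ is a lower bound for $S_a$ (that is what $\mathrm{(ii)}$ gives, once we know every element of $S_a$ has the form $g(a,b)\vee b$ for a suitable witness). The key observation I would isolate first is the remark already recorded just before the lemma: since $g$ satisfies $\mathbf{(M)}$ and $A$ is a $\vee$-semilattice, $g(a,\,g(a,b)\vee b)\le g(a,b)\vee b$, so $g(a,b)\vee b\in S_a$ for every $b$. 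In particular $b\in S_a$ implies $b=g(a,b)\vee b$ (because $g(a,b)\le b$), so the elements of $S_a$ are exactly the fixed points of the map $b\mapsto g(a,b)\vee b$, and $S_a=\{g(a,b)\vee b : b\in S_a\}\subseteq\{g(a,b)\vee b:b\in A\}$.

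For the direction (a)$\Rightarrow$(b) I would simply take $h=f$. Condition $\mathrm{(i)}$ holds because $f(a)=\mathrm{min}\,S_a\in S_a$, so $g(a,f(a))\le f(a)$. For condition $\mathrm{(ii)}$, fix $a,b$; by the key observation $g(a,b)\vee b\in S_a$, and since $f(a)$ is the minimum of $S_a$ we get $f(a)\le g(a,b)\vee b$, which is exactly $\mathrm{(ii)}$. For the converse (b)$\Rightarrow$(a), suppose $h$ satisfies $\mathrm{(i)}$ and $\mathrm{(ii)}$. Condition $\mathrm{(i)}$ says $h(a)\in S_a$, so $S_a\neq\emptyset$ and $h(a)$ is a candidate. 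To see $h(a)$ is the least element of $S_a$, take any $b\in S_a$; then $g(a,b)\le b$, hence $g(a,b)\vee b=b$, and instantiating $\mathrm{(ii)}$ at this $b$ gives $h(a)\le g(a,b)\vee b=b$. Thus $h(a)$ is a lower bound of $S_a$ that also belongs to $S_a$, so $h(a)=\mathrm{min}\,S_a$; this both shows the minimum exists (establishing (a)) and identifies $f(a)=h(a)$, giving the moreover clause.

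The argument is essentially a formal manipulation once the right reading of $\mathrm{(ii)}$ is in place, so I do not expect a serious obstacle; the only subtle point—and the one I would be careful to state explicitly—is why condition $\mathrm{(ii)}$, which is quantified over \emph{all} $b\in A$, correctly encodes ``lower bound of $S_a$'' rather than something weaker or stronger. The resolution is the equivalence ``$b\in S_a$ iff $g(a,b)\vee b=b$'' together with the fact that $g(a,b)\vee b$ always lands in $S_a$; these two facts, both flowing from $\mathbf{(M)}$, are what make the range of $b\mapsto g(a,b)\vee b$ equal to $S_a$ and thereby turn the universal inequality $h(a)\le g(a,b)\vee b$ into the assertion that $h(a)$ lies below every member of $S_a$. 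I would therefore foreground the $\mathbf{(M)}$-based observation as a preliminary step and keep the two implications short and symmetric around it.
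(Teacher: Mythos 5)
Your proof is correct, but it takes a genuinely different route from the paper: the paper does not prove this lemma at all --- its entire proof is the one-line citation ``It follows from \cite[Lemma 15]{CMS}'', deferring to Castiglioni, Menni and Sagastume. Your argument supplies exactly the content that citation hides, and it does so correctly. The pivotal step is your observation that, under condition $\mathbf{(M)}$, the set $S_a=\{b\in A: g(a,b)\le b\}$ coincides with the range of the map $b\mapsto g(a,b)\vee b$: every value $g(a,b)\vee b$ lies in $S_a$ (this is precisely the remark the paper records just before the lemma), and every $b\in S_a$ is a fixed point of that map. With this in hand, (a)$\Rightarrow$(b) is just the statement that a minimum of $S_a$ is both a member and a lower bound of $S_a$, while (b)$\Rightarrow$(a) reverses the reading: (i) puts $h(a)$ in $S_a$, and (ii) instantiated at any $b\in S_a$ (where $g(a,b)\vee b=b$) makes $h(a)$ a lower bound, so $h(a)=\min S_a$; since this holds for an arbitrary $h$ satisfying (i) and (ii), the ``moreover'' clause $f=h$ also follows, with no extra work. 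What your approach buys is self-containedness and a verification at the stated level of generality (an arbitrary $\vee$-semilattice and a map satisfying only $\mathbf{(M)}$), rather than inheriting the statement from the residuated-lattice setting of the cited paper; what the paper's approach buys is brevity. One small stylistic point: you write $S_a\subseteq\{g(a,b)\vee b: b\in A\}$, but your argument in fact gives equality of these two sets, which is the cleaner fact to foreground.
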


\begin{proof}
It follows from \cite[Lemma 15]{CMS}.
\end{proof}

Let $A$ be an algebra and $g:A\times A \ra A$ a function. For
every $a\in A$ define the function $g_a:A\ra A$ by $g_{a}(a,b) =
g(a,b)$. We say that \emph{$g$ is compatible in the first
variable} if $g_a$ is compatible for every $a\in A$. \vspace{1pt}

We apply Lemma \ref{lec} in order to prove the following
proposition.

\begin{prop} \label{min}
Let $A\in \V$ and let $f \colon A \to A$ be a function. The
following conditions are equivalent:
\begin{enumerate}[\normalfont \hspace{2mm} 1.]
\item $f$ is compatible. \item There
exists a function $g \colon A\times A \to A$ that satisfies
$\mathbf{(M)}$, compatible in the first variable and such that
$f(a) =$ min $\{b\in A:g(a,b) \leq b\}$. \item
There exists a function $\hat{g} \colon A\times A \to A$ that
satisfies $\mathbf{(M)}$, compatible in the first variable and
such that satisfies the following conditions for every $a,b\in A$:
\begin{enumerate}
\item[$\mathrm{(i)}$] $\hat{g}(a,f(a)) \leq f(a)$,
\item[$\mathrm{(ii)}$] $f(a) \leq \hat{g}(a,b) \vee b$.
\end{enumerate}
\end{enumerate}
Moreover, in this case we have that $g = \hat{g}$.
\end{prop}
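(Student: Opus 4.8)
The plan is to prove the three implications $(1)\Rightarrow(2)$, $(2)\Leftrightarrow(3)$ and $(3)\Rightarrow(1)$, and to arrange the construction so that a single function witnesses both $(2)$ and $(3)$, which will give the closing identity $g=\hat{g}$ for free. The first two implications are soft and follow from Lemma \ref{lec}; the real work is in $(3)\Rightarrow(1)$.

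For $(1)\Rightarrow(2)$ I would take the function that is constant in its second argument, $g(a,b):=f(a)$ for all $a,b\in A$. This $g$ satisfies $\mathbf{(M)}$ trivially, since $g(a,c)=g(a,b)$ whenever $c\geq b$, and it is compatible in the first variable because fixing the second argument yields the unary map $f$, which is compatible by hypothesis. Finally $\{b\in A: g(a,b)\leq b\}=\{b\in A: f(a)\leq b\}$ has least element $f(a)$, so $f(a)=\min\{b\in A: g(a,b)\leq b\}$. The equivalence $(2)\Leftrightarrow(3)$ is then a direct application of Lemma \ref{lec} with $g=\hat{g}$: if $(2)$ holds the min-map is $f$, so the lemma furnishes a map $h$ satisfying (i) and (ii) with $h=f$, i.e.\ $f$ itself satisfies (i) and (ii), which is $(3)$; conversely, if $(3)$ holds then $f$ plays the role of $h$ in Lemma \ref{lec}, so the min-map exists and equals $f$, which is $(2)$. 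In both cases the underlying function is unchanged, which yields $g=\hat{g}$.

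The substantive step is $(3)\Rightarrow(1)$, and this is where I expect the only difficulty. Fix an arbitrary congruence $\theta\in\ConA$ and a pair $(a,b)\in\theta$; since $\theta$ is arbitrary, it suffices to show $(f(a),f(b))\in\theta$. Working in the quotient $A/\theta$ (again a member of $\V$) and writing $\bar{x}$ for the class of $x$, the key computation is the following. By (ii) applied to the pair $(a,f(b))$ we have $f(a)\leq \hat{g}(a,f(b))\vee f(b)$. Because $(a,b)\in\theta$ and $\hat{g}$ is compatible in the first variable, substituting $a$ and $b$ into the first argument gives $(\hat{g}(a,f(b)),\hat{g}(b,f(b)))\in\theta$; combining this with (i) in the instance $\hat{g}(b,f(b))\leq f(b)$ yields $(\hat{g}(a,f(b))\vee f(b),\, f(b))\in\theta$. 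Passing to the quotient, $\overline{f(a)}\leq \overline{\hat{g}(a,f(b))\vee f(b)}=\overline{f(b)}$. The symmetric argument, interchanging $a$ and $b$, gives $\overline{f(b)}\leq\overline{f(a)}$, whence $\overline{f(a)}=\overline{f(b)}$, that is $(f(a),f(b))\in\theta$.

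The crux of this last paragraph is precisely the use of compatibility of $\hat{g}$ in the first variable to transport $\hat{g}(a,f(b))$ to $\hat{g}(b,f(b))$ modulo $\theta$, after which condition (i) collapses the join to $f(b)$ and the monotonicity of the quotient map does the rest. I note that this direction needs neither Theorem \ref{PT} nor Proposition \ref{pcom} directly, only the definition of compatibility through the behaviour on every congruence; one could, if preferred, recast the same inequalities in terms of the witnessing terms $t_{n}^{k}(a,b)$ provided by Proposition \ref{pcom} and Remark \ref{r1}, but the quotient argument is cleaner.
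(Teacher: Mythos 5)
Your proof is correct, and the soft parts coincide with the paper's: the implication $1.\Rightarrow 2.$ via the constant-in-the-second-argument function $g(a,b)=f(a)$, and the equivalence $2.\Leftrightarrow 3.$ as an immediate application of Lemma \ref{lec}, are exactly what the paper does. Where you genuinely diverge is in $3.\Rightarrow 1.$ The paper stays inside its own machinery: it applies Proposition \ref{pcom} to the compatible unary function $x\mapsto \hat{g}(x,f(b))$ to obtain explicit witnesses $t_{n}^{k}(a,b)$, and then, using (i) and (ii), massages the three term inequalities $\mathrm{(Cf1)}$--$\mathrm{(Cf3)}$ for $\hat{g}$ into the corresponding ones for $f$, concluding compatibility of $f$ again by Proposition \ref{pcom}. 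You instead argue directly on an arbitrary congruence $\theta$ with $(a,b)\in\theta$: condition (ii) gives $f(a)\leq \hat{g}(a,f(b))\vee f(b)$, compatibility in the first variable transports $\hat{g}(a,f(b))$ to $\hat{g}(b,f(b))$ modulo $\theta$, condition (i) collapses $\hat{g}(b,f(b))\vee f(b)$ to $f(b)$, and antisymmetry in the quotient lattice (plus the symmetric argument) yields $(f(a),f(b))\in\theta$. Your route is more elementary and more general: it uses only that $A/\theta$ has a lattice reduct, so it would prove the analogous statement in any variety of lattice-ordered algebras, with no appeal to distributivity, the monoidal operation, Theorem \ref{PT}, or the terms $t_{n}^{k}$. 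What the paper's route buys is what the section is designed to showcase: the characterization of principal congruences does the work, and it produces explicit term witnesses $(n,k)$ for the compatibility of $f$ from those of $\hat{g}$, rather than a purely existential congruence argument. Both proofs use only (i) and (ii) in this direction (condition $\mathbf{(M)}$ is needed only for the equivalence $2.\Leftrightarrow 3.$), and both read ``compatible in the first variable'' as compatibility of $x\mapsto\hat{g}(x,c)$ for each fixed $c$, which is how the paper's own proof employs it.
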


\begin{proof}
Assume  condition 1., i.e., that $f$ is compatible. We define
$g \colon A\times A \to A$ by $g(a,b) = f(a)$. Hence,
condition $\mathrm{2.}$ is obtained. The equivalence between
$\mathrm{2.}$ and $\mathrm{3.}$ follows from Lemma \ref{lec}.

In order to show that  condition $\mathrm{3.}$ implies
condition $\mathrm{1.}$, let $a, b\in A$. Since $g$ is compatible
in the first variable then it follows from Proposition \ref{pcom}
that there are natural numbers $n$ and $k$ such that
\begin{equation}\label{eqone}
t_{n}^{k}(a,b)\cd (g(a,f(b)) \we a \we b) \leq g(b,f(b)) \we a \we
b,
\end{equation}
\begin{equation}\label{eqtwo}
t_{n}^{k}(a,b)\cd (g(a,f(b)) \vee a \vee b) \leq g(b,f(b)) \vee a
\vee b,
\end{equation}
\begin{equation}\label{eqthree}
t_{n}^{k}(a,b) \leq g(a,f(b))\ra g(b,f(b)).
\end{equation}
Taking into account (\ref{eqone}) and Lemma \ref{lec} we have that
\[
\begin{array}
[c]{lllll}
 t_{n}^{k}(a,b)\cd (f(a)\we a\we b) & \leq & t_{n}^{k}(a,b)\cd((g(a,f(b)) \vee f(b))\we a \we b) &  & \\
 & = & t_{n}^{k}(a,b) \cd ((g(a,f(b))\we a \we b) \vee (f(b)\we a \we b))  &  &\\
 & = & (t_{n}^{k}(a,b)\cd (g(a,f(b))\we a \we b)) \vee (t_{n}^{k}(a,b)\cd (f(b)\we a \we b))  &  &\\
 & \leq & (g(b,f(b))\we a \we b) \vee (f(b)\we a \we b)&  &\\
 & \leq & (f(b)\we a \we b) \vee (f(b)\we a \we b)&  &\\
 & = & f(b) \we a \we b. &  &
\end{array}
\]
Hence,
\begin{equation} \label{eqfour}
t_{n}^{k}(a,b)\cd (f(a)\we a\we b) \leq f(b) \we a \we b.
\end{equation}
Besides, it follows from (\ref{eqtwo}) and Lemma \ref{lec} that
\[
\begin{array}
[c]{lllll}
 t_{n}^{k}(a,b)\cd (f(a)\vee a\vee b) & \leq & t_{n}^{k}(a,b)\cd(g(a,f(b)) \vee f(b) \vee a \vee b) &  & \\
 & = & (t_{n}^{k}(a,b)\cd (g(a,f(b))\vee a \vee b)) \vee (t_{n}^{k}(a,b)\cd (f(b)\vee a \vee b))  &  &\\
 & \leq & (g(b,f(b))\vee a \vee b) \vee (f(b)\vee a \vee b)&  &\\
 & \leq & (f(b)\vee a \vee b) \vee (f(b)\vee a \vee b)&  &\\
 & = & f(b) \vee a \vee b. &  &
\end{array}
\]
Thus,
\begin{equation} \label{eqfive}
t_{n}^{k}(a,b)\cd (f(a)\vee a\vee b) \leq f(b) \vee a \vee b.
\end{equation}
Finally we will prove that $t_{n}^{k}(a,b)\leq f(a) \ra f(b)$. It
follows from (\ref{eqthree}) and the inequality $g(b,f(b))\leq
f(b)$ that
\begin{equation}\label{eqsix}
t_{n}^{k}(a,b) \leq g(a,f(b))\ra b.
\end{equation}
Since $f(a)\leq g(a,f(b))\vee f(b)$ then $f(a)\ra f(b) \geq
(g(a,f(b))\vee f(b)) \ra f(b)$. But $(g(a,f(b))\vee f(b)) \ra f(b)
= g(a,f(b))\ra f(b)$, so it follows from (\ref{eqsix}) that
\begin{equation} \label{eqseven}
t_{n}^{k}(a,b)\leq f(a) \ra f(b).
\end{equation}
Therefore, it follows from (\ref{eqfour}), (\ref{eqfive}),
(\ref{eqseven}) and Proposition \ref{pcom} that $f$ is a
compatible function.
\end{proof}

In the rest of this section we apply Proposition \ref{min} in
order to study possible generalizations of the gamma function
\cite[Example 5.1]{Caici}, the successor function \cite[Example
5.2]{Caici} and the Gabbay's function \cite[Example 5.3]{Caici}
considered by Caicedo and Cignoli in \cite{Caici} as examples of
implicit compatible operations on Heyting algebras. These
functions were also generalized in different frameworks, as for
instance in residuated lattices \cite{CMS,CSM} and in weak Heyting
algebras \cite{Cel-SM,SMc}. \vspace{1pt}

We start with the following definition that can be found in
\cite{C}.

\begin{defn}
Let $\Var$ be a variety of algebras of type $F$ and let $\epsilon
(C)$ be a set of identities of type $F \cup C$, where $C$ is a
family of new function symbols. We say that $\epsilon{(C)}$
defines \emph{implicitly} $C$, if in each algebra $A\in V$ there
is at most one family $\{f_{A}\colon A^{n} \rightarrow A\}_{f\in
C}$ such that $(A,f_{A})_{f\in C}$ satisfies the universal
closure of the equations in $\epsilon{(C)}$. In this case we say
that each $f$ is implicitly defined in $\Var$.
\end{defn}

In what follows we will consider $A\in \V$ and $n$ a natural
number.

\begin{ex}
Suppose that the underlying lattice of $A$ is bounded, and write
$0$ for the smallest element. We define the unary compatible
function $\gamma_{n}$ by
\[
\gamma_{n}(a) =  \mathrm{min}\{b\in A: a \vee \neg b^{n}\leq
b\},
\]
where $\neg x$ is defined by $x\ra 0$. Equivalently, $\gamma_{n}$
can be implicitly defined by the inequalities
\begin{enumerate}
\item[(g1)] $a \vee \neg(\gamma_{n}(a))^{n} \leq \gamma_{n}(a)$,
\item[(g2)]
$\gamma_{n}(a) \leq a \vee \neg b^{n} \vee b$.
\end{enumerate}
\end{ex}

The function $\gamma_n$ preserves the order, i.e., if $a\leq b$,
then $\gamma_{n}(a)\leq \gamma_{n}(b)$. In order to show it, let
$a\leq b$. By $\mathrm{(g2})$ we have that $\gamma_{n}(a)\leq a
\vee \neg(\gamma_{n}(b))^{n} \vee \gamma_{n}(b)$. Since $a\leq b$
then $\gamma_{n}(a)\leq b \vee \neg(\gamma_{n}(b))^{n} \vee
\gamma_{n}(b)$. Besides, by $\mathrm{(g1)}$ we obtain $b\vee
\neg(\gamma_{n}(b))^{n} \leq \gamma_{n}(b)$. Hence,
$\gamma_{n}(a)\leq \gamma_{n}(b)$.

\begin{lem}\label{lemgamma}
The function $\gamma_{n}$ is characterized as the unary function
that satisfies the following conditions for every $a,b$:
\begin{enumerate}
\item[$\mathrm{(g3)}$]  $\neg (\gamma_{n}(0))^{n} \leq
\gamma_{n}(0)$,
\item[$\mathrm{(g4)}$]  $\gamma_{n}(0) \leq b \vee
\neg b^{n}$,
\item[$\mathrm{(g5)}$]  $\gamma_{n}(a) = a \vee
\gamma_{n}(0)$.
\end{enumerate}
In particular, $\gamma_{n}$ is a polynomial function on $A$.
\end{lem}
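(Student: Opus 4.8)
The plan is to prove two things: that $\gamma_{n}$ itself satisfies $\mathrm{(g3)}$--$\mathrm{(g5)}$, and that these three conditions determine the function uniquely; the polynomiality then falls out of $\mathrm{(g5)}$. First I would read off $\mathrm{(g3)}$ and $\mathrm{(g4)}$ directly from the implicit definition by specializing to $a=0$: since $0$ is the least element, $0\vee\neg b^{n}=\neg b^{n}$, so $\mathrm{(g1)}$ at $a=0$ becomes $\neg(\gamma_{n}(0))^{n}\leq\gamma_{n}(0)$, which is $\mathrm{(g3)}$, and $\mathrm{(g2)}$ at $a=0$ becomes $\gamma_{n}(0)\leq b\vee\neg b^{n}$, which is $\mathrm{(g4)}$. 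Conceptually this records that $\gamma_{n}(0)$ is the least element $b$ with $\neg b^{n}\leq b$, a fact I would keep in mind for the uniqueness argument.

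Next I would prove $\mathrm{(g5)}$, i.e.\ $\gamma_{n}(a)=a\vee\gamma_{n}(0)$, by two inequalities. For $\leq$, I instantiate $\mathrm{(g2)}$ at $b=\gamma_{n}(0)$ to get $\gamma_{n}(a)\leq a\vee\neg(\gamma_{n}(0))^{n}\vee\gamma_{n}(0)$, and then absorb the middle term using $\mathrm{(g3)}$, since $\neg(\gamma_{n}(0))^{n}\leq\gamma_{n}(0)$ gives $a\vee\neg(\gamma_{n}(0))^{n}\vee\gamma_{n}(0)=a\vee\gamma_{n}(0)$. For $\geq$, I note $a\leq\gamma_{n}(a)$ is immediate from $\mathrm{(g1)}$, while $\gamma_{n}(0)\leq\gamma_{n}(a)$ follows from the monotonicity of $\gamma_{n}$ already established above (as $0\leq a$); together these give $a\vee\gamma_{n}(0)\leq\gamma_{n}(a)$.

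For uniqueness, suppose $h\colon A\to A$ satisfies the analogues of $\mathrm{(g3)}$--$\mathrm{(g5)}$ with $h$ in place of $\gamma_{n}$. By $\mathrm{(g5)}$ for both functions it suffices to show $h(0)=\gamma_{n}(0)$, and I would do this by a symmetric two-sided comparison. Applying $\mathrm{(g4)}$ for $\gamma_{n}$ with $b=h(0)$ gives $\gamma_{n}(0)\leq h(0)\vee\neg(h(0))^{n}$, and the analogue of $\mathrm{(g3)}$ for $h$ collapses the right-hand side to $h(0)$, so $\gamma_{n}(0)\leq h(0)$; the reverse inequality is obtained identically with the roles of $h$ and $\gamma_{n}$ interchanged. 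Hence $h(0)=\gamma_{n}(0)$ and therefore $h=\gamma_{n}$. Finally, $\mathrm{(g5)}$ exhibits $\gamma_{n}$ as the map $x\mapsto x\vee\gamma_{n}(0)$, which is a polynomial function and in particular compatible. The only place requiring care is the $\geq$ direction of $\mathrm{(g5)}$: it leans on the antitonicity of $\neg$ and the monotonicity of $x\mapsto x^{n}$ (both consequences of property $1)$) through the already-proved monotonicity of $\gamma_{n}$; alternatively one observes from $\mathrm{(g1)}$ that $\gamma_{n}(a)$ lies in $\{b:\neg b^{n}\leq b\}$ and invokes $\mathrm{(g4)}$ to bound $\gamma_{n}(0)$ below it.
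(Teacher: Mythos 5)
Your proof is correct, and it is worth noting where it coincides with and where it departs from the paper's own argument. The first half is essentially identical: you obtain $\mathrm{(g3)}$ and $\mathrm{(g4)}$ by putting $a=0$ in $\mathrm{(g1)}$ and $\mathrm{(g2)}$, you get $\gamma_{n}(a)\leq a\vee\gamma_{n}(0)$ from $\mathrm{(g2)}$ with $b=\gamma_{n}(0)$ absorbed via $\mathrm{(g3)}$, and you get the reverse inequality from $a\leq\gamma_{n}(a)$ together with the monotonicity of $\gamma_{n}$ proved just before the lemma (your alternative, using $\mathrm{(g1)}$ to see that $\neg(\gamma_{n}(a))^{n}\leq\gamma_{n}(a)$ and then applying $\mathrm{(g4)}$ with $b=\gamma_{n}(a)$, is also fine and avoids monotonicity altogether). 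The divergence is in the uniqueness half. The paper proves the converse implication: any unary function satisfying $\mathrm{(g3)}$--$\mathrm{(g5)}$ also satisfies $\mathrm{(g1)}$ and $\mathrm{(g2)}$ --- $\mathrm{(g2)}$ from $\mathrm{(g4)}$ and $\mathrm{(g5)}$, and $\mathrm{(g1)}$ by using $\mathrm{(g5)}$ to get $\gamma_{n}(0)\leq\gamma_{n}(a)$, hence $\neg(\gamma_{n}(a))^{n}\leq\neg(\gamma_{n}(0))^{n}\leq\gamma_{n}(0)$ --- so that uniqueness is inherited from the implicit definition $\mathrm{(g1)}$--$\mathrm{(g2)}$ (i.e.\ from the uniqueness of the minimum, Lemma \ref{lec}). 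You instead prove uniqueness directly: two functions satisfying $\mathrm{(g3)}$--$\mathrm{(g5)}$ must agree at $0$, by playing $\mathrm{(g4)}$ of one against $\mathrm{(g3)}$ of the other symmetrically, and then agree everywhere by $\mathrm{(g5)}$. Both routes are valid. The paper's buys slightly more --- it shows the two systems of inequalities are equivalent as implicit definitions, independently of any appeal to $\gamma_{n}$'s minimality --- while yours is more self-contained for the uniqueness step, never needing to return to $\mathrm{(g1)}$--$\mathrm{(g2)}$ or to the minimum characterization. The polynomiality conclusion from $\mathrm{(g5)}$ is the same in both.
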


\begin{proof}
Assume that $\gamma_n$ is an unary function that satisfies
$\mathrm{(g1)}$ and $\mathrm{(g2)}$. If we put $a = 0$ in
$\mathrm{(g1)}$ and $\mathrm{(g2)}$ then the equations
$\mathrm{(g3)}$ and $\mathrm{(g4)}$ follow. In what follows we
will prove $\mathrm{(g5)}$. By $\mathrm{(g2)}$ with $b =
\gamma_{n}(0)$ and by $\mathrm{(g3)}$ we obtain $\gamma_{n}(a)
\leq a \vee \gamma_{n}(0)$. In order to prove the other
inequality, note that it follows from $\mathrm{(g1)}$ that $a\leq
\gamma_{n}(a)$. Since $\gamma_{n}$ preserves the order then
$\gamma_{n}(0)\leq \gamma_{n}(a)$. Thus, $a\vee \gamma_{n}(0)\leq
\gamma_{n}(a)$. Hence, $\gamma_{n}(a) = a \vee \gamma_{n}(0)$,
which is $\mathrm{(g5)}$. Conversely, assume that $\gamma_n$ is a
unary function that satisfies $\mathrm{(g3)}$, $\mathrm{(g4)}$
and $\mathrm{(g5)}$. Condition $\mathrm{(g2)}$ follows from
$\mathrm{(g4)}$ and $\mathrm{(g5)}$. Finally we will prove
$\mathrm{(g1)}$. Since $\mathrm{(g5)}$ holds, then
$\gamma_{n}(0)\leq \gamma_{n}(a)$, so $(\gamma_{n}(0))^{n} \leq
(\gamma_{n}(a))^{n}$. Taking into account $\mathrm{(g3)}$ we
obtain $\neg (\gamma_{n}(a))^{n} \leq \neg (\gamma_{n}(0))^{n}
\leq \gamma_{n}(0)$. Thus, $a\vee \neg (\gamma_{n}(a)^{n}) \leq a
\vee \gamma_{n}(0) = \gamma_{n}(a)$. Therefore we have showed
condition $\mathrm{(g1)}$, which was our aim.
\end{proof}

It follows from Lemma \ref{lemgamma} that $\gamma_n$ is a
polynomial function, which implies that $\gamma_n$ is a compatible
function. Then we have obtained an alternative proof for the
compatibility of $\gamma_n$.

\begin{rem}
Let us write $\gamma$ for the function on Heyting algebras given
in \cite[Example 5.1]{Caici}. It was proved in \cite{FCSM} that
$\gamma$ can be defined by $\gamma_{n}(a) = \mathrm{min}\{b\in A:
a \vee \neg b\leq b\}$, or equivalently, as the unary function
that  satisfies the conditions $\mathrm{(g3)}$, $\mathrm{(g4)}$
and $\mathrm{(g5)}$ from Lemma \ref{lemgamma}. Thus, on Heyting
algebras we have that the definitions of $\gamma$ and $\gamma_1$
are the same.
\end{rem}

\begin{ex}
We define the unary compatible function $S_n$ by
\[
S_{n}(a) = \mathrm{min}\{b\in A: b^{n}\ra a\leq b\}.
\]
Equivalently, $S_{n}$ can be implicitly defined by the
inequalities
\begin{enumerate}
\item[(S1)] $(S_{n}(a))^{n} \ra a \leq S_{n}(a)$, \item[(S2)] $S_{n}(a) \leq b
\vee (b^{n}\ra a)$.
\end{enumerate}
\end{ex}

On Heyting algebras the function $S_1$ is the successor function.
For details about the successor function on Heyting algebras see
\cite{Esa,Kuz,LE}.

\begin{ex}
Assume that the underlying lattice of $A$ is bounded. Define the
unary compatible function $G_n$ by
\[
G_{n}(a) = \mathrm{min}\{b\in A: (b^{n}\ra a)\we \neg \neg a
\leq b\}.
\]
In an equivalent way, $G_{n}$ can be implicitly defined by the
inequalities
\begin{enumerate}
\item[(G1)] $((G_{n}(a))^{n} \ra a)\we \neg \neg a \leq G_{n}(a)$,
\item[(G2)]
$G_{n}(a) \leq b \vee ((b^{n}\ra a)\we \neg \neg a)$.
\end{enumerate}
\end{ex}

On Heyting algebras the function $G_1$ is the Gabbay's function,
which will be denoted $G$. The description of $G$ as the minimum of
certain set was proved in \cite{FCSM}. See also \cite{Gabbay} for
historical remarks about $G$.

\section{Other connections with existing literature}\label{SFR}

In \cite{Ce} Celani introduced distributive lattices with fusion
and implication. An algebra $(A,\we,\vee, \ra,0,1)$ of type
$(2,2,2,0,0)$ is a \emph{distributive lattice with implication} if
$(A,\we,\vee,0,1)$ is a bounded distributive lattice and for every
$a,b,c\in A$ the following conditions are satisfied:
\begin{enumerate}
\item[(I1)] $(a\ra b)\we(a\ra c)=a\ra(b\we c)$, \item[(I2)] $(a\ra
c)\we(b\ra c)=(a\vee b)\ra c$, \item[(I3)] $0\ra a = a \ra 1 = 1$.
\end{enumerate}
An algebra $(A,\we,\vee, \cd,0,1)$ of type $(2,2,2,0,0)$ is a
\emph{distributive lattice with fusion} if $(A,\we,\vee,0,1)$ is a
bounded distributive lattice and for every $a,b,c\in A$ the
following conditions are satisfied:
\begin{enumerate}
\item[(F1)] $a\cd (b\vee c) = (a\cd b) \vee (a\cd c)$, \item[(F2)]
$(b\vee c)\cd a = (b \cd c) \vee (a\cd c)$, \item[(F3)] $0\cd a =
a\cd 0 = 0$.
\end{enumerate}
An algebra $(A,\we,\vee,\cd,\ra,0,1)$ is a \emph{bounded
distributive lattice with fusion and implication} if $(A,\we,\vee,
\ra,0,1)$ is a bounded distributive lattice with implication and
$(A,\we,\vee, \cd,0,1)$ is a bounded distributive lattice with
fusion. Straightforward computations show that for every
$(A,\we,\vee,\cd,\ra,1) \in \V$ with a smallest element $0$ we
have that the algebra $(A,\we,\vee,\cd,\ra,0,1)$ is a distributive
lattice with fusion and implication.

There are other connections of the present paper with existing
literature. A \emph{generalized commutative residuated lattice}
\cite[Def.\ 1.1]{SMb} is an algebra $(A, \wedge, \vee, \cd, \ra,
e)$ of type $(2,2,2,2,0)$ that  satisfies the following
conditions: $(A,\cd ,e)$ is a commutative monoid, $(A, \vee,
\wedge)$ is a lattice, and for every $a,b, c \in A$ the following
properties hold: $a\ra(b\we c) = (a\ra b)\we(a\ra c)$, $(a\vee
b)\ra c = (a\ra c)\wedge(b\ra c)$, $(a\ra b)\cd(b\ra c)\leq a\ra
c$, and $e\leq a\ra a$. We write $\GCRL$ for the variety of
generalized commutative residuated lattices. Clearly, $\V$ is a
subvariety of $\GCRL$.

\begin{rem}
Let $A\in \GCRL$, $a\in A$ and $n\geq 1$. We define $\square(a): =
e\ra a$. As usual, also define $\square^{2}(a) = \square(\square
(a))$ and $a^{2} = a \cd a$. Notice that if $A\in \V$, the
definition of $\square$ given before collapses to the definition
of $\square$ given in Section 2 for algebras of $\V$.
\end{rem}

\begin{defn} \cite[Def.\ 1.2]{SMb}
A \emph{commutative weak residuated lattice} is a generalized
commutative residuated lattice $(A, \wedge, \vee, \cd, \ra, e)$
that satisfies the following conditions for every $a,b,c \in A:$
\begin{enumerate}[\normalfont \hspace{3mm} (R1)]
\item $a\cd(a\ra b) \leq b$, \item $(a\cd b) \vee (a \cd c) = a
\cd (b\vee c)$, \item $\square(a) \leq b\ra (b \cd a)$, \item
$a\leq (a\ra e)\ra e$, \item $\square(a^{2}) \leq \square(a) \cd
\square(a)$, \item $\square(a) \ra (\square(a)\ra e) \leq
\square^{2}(a) \ra e$, \item $(\square(a) \ra e) \cd(\square(a)
\ra e) \leq \square(a^{2}) \ra e$.
\end{enumerate}
\end{defn}

We write $\CWRL$ for the variety of commutative weak residuated
lattices. Commutative weak residuated lattices are a common
abstraction of commutative residuated lattices \cite{Ts} and weak
Heyting algebras that satisfy $a\we (a\ra b)\leq b$ for every
$a,b$. The varieties $\V$ and $\CWRL$ are incomparable. In order
to show it, first note that there are commutative residuated
lattices without largest element (or with underlying lattice not
necessarily distributive), so $\CWRL \nsubseteq \V$. Besides,  the
algebras of $\CWRL$ satisfy the inequality $a\cd (a\ra b)\leq b$
for every $a,b$. Since the previous inequality is not satisfied by
the algebra given in Example \ref{ex1}, then $\V \nsubseteq
\CWRL$.

\subsection*{Acknowledgments}
This project has received funding from the European Union's
Horizon 2020 research and innovation programme under the Marie
Sklodowska-Curie grant agreement No. 689176.

The first author was also partially supported by the research
grant 2014 SGR 788 from the government of Catalonia and by the
research projects MTM2011-25747 and MTM2016-74892-P from the
government of Spain, which includes \textsc{feder} funds from the
European Union and he also acknowledges financial support from the
Spanish Ministry of Economy and Competitiveness, through the
``Mar\'ia de Maeztu'' Programme for Units of Excellence in R\&D
(MDM-2014-0445). The second author was also supported by CONICET
Project PIP 112-201501-00412.


{\small }

\newpage

-----------------------------------------------------------------------------------------
\\
Ramon Jansana,\\
Barcelona Graduate School of Mathematics\\
Philosophy Department
Universitat de  Barcelona.\\
Montalegre, 6,\\
08001, Barcelona,\\
España.\\
jansana@ub.edu

---------------------------------------------------------------------------------------
\\
Hern\'an Javier San Mart\'in,\\
Departamento de Matem\'atica, \\
Facultad de Ciencias Exactas (UNLP), \\
and CONICET.\\
Casilla de correos 172,\\
La Plata (1900),\\
Argentina.\\
hsanmartin@mate.unlp.edu.ar


\begin{thebibliography}{99}


\bibitem{Ag}
Agliano P., \emph{Ternary deductive terms in residuated
structures}. Acta Sci. Math. (Szeged) 68, 397--429 (2002).

\bibitem{BD}
Balbes R. and Dwinger P., \emph{Distributive Lattices}, University
of Missouri Press, 1974.

\bibitem{Ben1}
Bezhanishvili N. and  Gehrke M., \emph{Finitely generated free
Heyting algebras via Birkhoff duality and coalgebra}. Logical
Methods in Computer Science 7, 1--24 (2011).

\bibitem{BP91}
Blok, W. and D. Pigozzi, \emph{Local Deduction Theorems in Algebraic Logic}, in \textit{Algebraic Logic}, edited by H. Andr{\'e}ka, J. D. Monk and I. {}N{\'e}meti, Colloq. Math. Soc. J{\'a}nos Bolyai 54, 75--109, 
North-Holland, Amsterdam, 1991.

\bibitem{BP}
Blok W.J. and Pigozzi D,
\emph{Abstract algebraic logics and the deduction theorem},
Manuscript, 2001.  http://orion.math.iastate.edu/dpigozzi/

\bibitem{BS}
Burris H. and Sankappanavar H.P, \emph{A Course in Universal
Algebra}. Springer Verlag, New York, 1981.

\bibitem{Caici}
Caicedo X. and Cignoli R., \emph{An algebraic approach to
intuitionistic connectives}. Journal of Symbolic Logic 4,
1620--1636 (2001).

\bibitem{C1}
Caicedo X., \emph{Implicit connectives of algebraizable logics}.
Studia Logica 78, No. 3, 155--170 (2004).

\bibitem{C}
Caicedo X., \emph{Implicit operations in MV-algebras and the
connectives of Lukasiewicz logic}. Lecture Notes in Computer
Science, vol. 4460, no. 1, 50--68 (2007).

\bibitem {CMS}
Castiglioni J.L., Menni M. and Sagastume M., \emph{Compatible
operations on commutative residuated lattices}. JANCL 18, 413--425
(2008).

\bibitem{FCSM}
Castiglioni J.L., Sagastume M. and San Martín H.J., \emph{On
frontal Heyting algebras}. Reports on Mathematical Logic, vol. 45,
201--224 (2010).

\bibitem{CSM}
Castiglioni J.L. and San Martín H.J., \emph{Compatible operations
on residuated lattices}. Studia Logica 98 (1-2), 203--222 (2011).

\bibitem{Ce}
Celani S.A, \emph{Distributive Lattices with Fusion and
Implication}. Southeast Asian Bulletin of Mathematics 28, 999-1010
(2004).

\bibitem{CJ}
Celani S.A. and Jansana R. \emph{Bounded distributive lattices
with strict implication}. Mathematical Logic Quarterly 51,
219--246 (2005).

\bibitem{Cel-SM}
Celani S.A. and San Martín H.J., \emph{Frontal operators in weak
Heyting algebras}. Studia Logica 100, 91--114 (2012).

\bibitem{CMO}
Cignoli R., D'Ottaviano I. and Mundici D., \emph{Algebraic
Foundations of Many--Valued Reasoning}. Trends in Logic, Studia
Logica Library, Vol. 7, Kluwer Academic Publishers (2000).

\bibitem{ESM}
Ertola R. and San Martín H.J., \emph{On some compatible operations
on Heyting algebras}. Studia Logica 98, 331--345 (2011).

\bibitem{Esa}
Esakia L., \emph{The modalized Heyting calculus: a conservative
modal extension of the Intuitionistic Logic}. Journal of Applied
Non-Classical Logics 16, 349--366 (2006).

\bibitem{Fo16}
Font J.M., \emph{Abstract Algebraic logic. An Introductory
Course}, College Publications, London, 2016.

\bibitem{FrGrQu79}
Fried E., Gr\"atzer G. and Quackenbush R.\ W., \emph{The
equational class generated by weakly associative lattices with the
unique bound property}. Ann. Univ. Sci. Budapest. Eštvšs Sect.
Math., 205--211 (1979/80).

\bibitem{FrGrQu80}
Fried E., Gr\"atzer G. and Quackenbush R.\ W., \emph{Uniform
congruence schemes}. Algebra Universalis 10, 176--188 (1980).

\bibitem{Gabbay}
Gabbay D.M., \emph{On some new intuitionistic propositional
connectives}. Studia Logica 36, 127--139 (1977).

\bibitem{Ts}
Hart J., Raftery L. and Tsinakis C., \emph{The structure of
commutative residuated lattices}. Internat. J. Algebra Comput. 12,
509--524 (2002).

\bibitem{Pix}
Kaarli K. and Pixley A.F., Polynomial completeness in algebraic
systems, Chapman and Hall/CRC, 2001.

\bibitem{Kuz}
Kuznetsov A. V. \emph{On the Propositional Calculus of
Intuitionistic Provability}. Soviet Math. Dokl. 32, 18--21 (1985).

\bibitem{LE}
Muravitsky A.Y., Logic KM: A biography, in: \emph{Leo Esakia on
Duality of Modal and Intuitionistic logics}. Series: Outstanding
Contributions to Logic, 4, Springer (2014), 147--177.

\bibitem{SM}
San Martín H.J., \emph{Compatible operations in some subvarieties
of the variety of weak Heyting algebras}, 8th Conference of the
European Society for Fuzzy Logic and Technology (EUSFLAT 2013),
475-480.

\bibitem{SMb}
San Martín H.J., \emph{Compatible operations on commutative weak
residuated lattices}. Algebra Universalis, vol. 73, no. 2,
143--155 (2015).

\bibitem{SMc}
San Martín H.J., \emph{Principal congruences in weak Heyting
algebras}. Algebra Universalis, vol. 75, no. 4,  405--418 (2016).

\end{thebibliography}
\end{document}